\newtheorem{theorem}{Theorem}
\newtheorem{proposition}{Proposition}
\newtheorem{lemma}{Lemma}
\newcommand{\sm}{\left(\begin{smallmatrix}}
\newcommand{\esm}{\end{smallmatrix}\right)}
\theoremstyle{definition}
  \DeclareMathAlphabet{\newcal}{U}{dutchcal}{m}{n}
  \DeclareMathAlphabet{\eucal}{U}{eus}{m}{n}
\def\Q{\mathbb Q} \def\R{\mathbb R} \def\C{\mathbb C} \def\Z{\mathbb Z}  \def\H{\mathcal H}
\def\={\;=\;} \def\+{\,+\,} \def\m{\,-\,}  \def\df{\;:=\;}  \def\la{\langle} \def\ra{\rangle}  
\def\e{\varepsilon}  \def\t{\tau}  \def\g{\gamma} \def\G{\Gamma}  \def\D{\Delta} \def\T{\Theta}  \def\j{V}
\def\V{\mathbb V} \def\hV{\widehat{\V}}  \def\W{\mathbb W}  \def\L{\newcal L} 
\def\CC{\mathbf C} \def\BC{\mathbf B}     \def\Cg{\mathfrak C}    \def\Rg{\mathfrak R}  \def\Dg{\mathfrak D} 
\def\BB{\mathcal B}  \def\BBc{\BB^{\rm cusp}}    
    \def\r{\mathbf r}
\def\be{\begin{equation}}  \def\ee{\end{equation}}  \def\bes{\begin{equation*}}  \def\ees{\end{equation*}}
\def\ba{\begin{align}}  \def\bas{\begin{align*}}  \def\ea{\end{align}}  \def\eas{\end{align*}}
\theoremstyle{remark}
\begin{document}

\title[Periods of modular forms on $\Gamma_0(N)$]
  {Periods of modular forms on $\Gamma_0(N)$ \\ and products of Jacobi theta functions}
\author{YoungJu Choie, Yoon Kyung Park, and Don Zagier}
 
\address{YoungJu Choie \endgraf  Department of Mathematics\endgraf
Pohang University of Science and Technology \endgraf Pohang, Republic of Korea}
\email{yjc@postech.ac.kr}

\address{Yoon Kyung Park\endgraf  Institute of Mathematical Sciences\endgraf
Ewha Womans University
\endgraf   Seoul, Republic of Korea}
\email{ykp@ewha.ac.kr}

\address{Don Zagier\endgraf  Max Planck Institute for Mathematics \endgraf
Bonn, Germany}  \email{dbz@mpim-bonn.mpg.de}



\begin{abstract}  Generalizing a result of~\cite{Z1991} for modular forms of level~one,
we give a closed formula for the sum of all Hecke eigenforms on $\Gamma_0(N)$, multiplied 
by their odd period polynomials in two variables, as a single product of Jacobi theta series 
for any squarefree level $N$.  We also show that for $N=2$,~3 and~5 this formula completely 
determines the Fourier expansions of all Hecke eigenforms of all weights on $\Gamma_0(N)$.
\end{abstract}

\maketitle
 
\section{Introduction and statement of theorems} \label{Intro}

It is well known that to any cusp form~$f$ on the full modular group $\Gamma_1=SL_2(\mathbb{Z})$
one can associate the {\em period polynomial} 
 \be\label{rfdef}
   r_f(X) \df \int_0^\infty f(\t)\,(X-\t)^{k-2}\,d\t \qquad(\text{$k$ = weight of~$f$})  \ee
and that the maps $f\mapsto r_{f}^{\rm ev}$ and $f \mapsto r_{f}^{\rm od}$ assigning to~$f$ the even 
and odd parts of~$r_f$ are both injective, with known images.  Furthermore, if $f$ is a 
Hecke form (= normalized Hecke eigenform), then the odd two-variable polynomial
  \be\label{Rfdef}  R_f(X,Y) \df \frac{r_f^{\rm ev}(X)\,r_f^{\rm od}(Y) \+ r_f^{\rm od}(X)\,r_f^{\rm ev}(Y)}
  {(2 i)^{k-3} \,\langle f,\, f\rangle}  \quad\in\;\C[X,Y] \ee 
transforms under $\sigma\in\text{Gal}(\C/\Q)$ by $R_{\sigma(f)}=\sigma(R_f)$,
so $R_f$ has coefficients in the number field~$\Q_f$ generated by the Fourier coefficients of~$f$ and, 
denoting by $\BBc_k$ the basis of Hecke cusp forms~$f$ of weight~$k$ on~$\G_1$, the sum
\be\label{Zmain2}   C_k^{\rm cusp}(X,Y,\t) \df \frac1{(k-2)!}\,\sum_{f\in\BBc_k} R_f(X,Y)\,f(\t) \ee
belongs  to $\Q[[q]][X,Y]$  for each integer~$k>0$. (Here $q=e^{2\pi i\t}$ as usual.)

In~\cite{Z1991}, a surprising identity was proved showing that all of these functions can be assembled 
into a single four-variable generating function that has a very simple expression as a 
product of Jacobi theta functions.  More precisely, it was shown that if one defines $C_k(X,Y,\t)$ for $k>0$ 
by adding to $C_k^{\rm cusp}(X,Y,\t)$ a term for the normalized Eisenstein series $G_k$ of weight~$k$ (with 
a suitable definition of $R_{G_k}(X,Y)$ that will be recalled below), then the generating function 
\be\label{genfn1} \CC(X,Y,\t,T) \df \frac{(X+Y)(XY-1)}{X^2Y^2T^2}\+\sum_{k=2}^\infty C_k(X,Y,\t)\,T^{k-2}\ee 
is given by the formula
\be\label{Zmain3}
\CC(X,Y,\t,T) \= \theta'_\t(0)^2\,\frac{\theta_\t\bigl((XY-1)T \bigr)\, \theta_\t\bigl((X+Y)T\bigr) } 
{\theta_\t\bigl(XYT \bigr)\,\theta_\t\bigl(XT\bigr)\ \theta_\t\bigl(YT\bigr)\,\theta_\t\bigl(T\bigr) }\;, 
\ee
where $\theta_\t(u)$ denotes the classical Jacobi theta function
\be\label{thetadef} \theta_\t(u) \= q^{1/8} e^{u/2} \prod_{n=1}^{\infty} (1-q^n)(1-q^ne^u)(1-q^{n-1}e^{-u})\,. \ee

Until now, attempts to find an analogous result for higher levels were not successful, and it was thought that 
the fact that the generating function for all level one Hecke forms and their periods had a {\it multiplicative} 
expression as a single product of theta functions was an accident, due to the special structure of~$\G_1$ 
as a group with two generators and of the algebra of modular forms on~$\G_1$ as a free algebra on two
generators. In this paper, we will show that this belief was wrong and that there is a statement of
precisely the same form for modular forms on $\Gamma_0(N)$ for every squarefree integer~$N$.
\begin{theorem}\label{Thm1} Let $N>0$ be squarefree and for every integer~$k\ge2$ define
\be\label{Main1}  C_{k,N}(X,Y,\t) \df \frac1{(k-2)!}\,\sum_{f\in\BB_{k,N}} R_f(X,Y)\,f(\t)\,,  \ee
where $\BB_{k,N}$ is the basis of Hecke forms on~$\G_0(N)$ defined in~\S2 and where $R_f(X,Y)$ is defined 
by~\eqref{rfdef} and~\eqref{Rfdef} when~$f$ is a cusp form and in the way explained in~\S2 when~$f$ 
is an Eisenstein series. Then the generating function
\be \label{Defgenfn} \CC_N(X,Y,\t,T) \df \frac{(X+Y)(NXY-1)}{NX^2Y^2T^2}
   \+\sum_{k=2}^\infty C_{k,N}(X,Y,\t)\,T^{k-2}  \ee
is given in terms of the level one theta function~\eqref{thetadef} by
\be\label{mm} 
 \CC_N(X,Y,\t,T) \= \frac{\theta_{\t}'(0)\,\theta_\t\bigl((X+Y)T\bigr)}{\theta_{\t}(XT)\,\theta_{\t}(YT)}
 \cdot\frac{\theta_{N\t}'\bigl(0)\,\theta_{N\t}\bigl(( NXY-1)T\bigr)}{\theta_{N\t}(NXYT)\,\theta_{N\t}(T)\,}\,. \ee
\end{theorem}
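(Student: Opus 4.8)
The plan is to read the left-hand side of \eqref{mm} as the \emph{double period} of an explicit kernel on $\G_0(N)$ and then to evaluate that kernel in closed form as the theta quotient on the right. The starting point is that the normalization in \eqref{Rfdef} carries the factor $1/\la f,f\ra$, so that for each $k$ the cuspidal part of the inner sum in \eqref{Main1} is a reproducing-kernel combination: using Haberland's formula to identify $\la f,f\ra$ with a pairing of periods, it equals the result of applying the two period functionals $f\mapsto r_f^{\rm ev}(X)$ and $f\mapsto r_f^{\rm od}(Y)$ (symmetrized, with the normalization $(2i)^{k-3}$) to the holomorphic reproducing kernel $\omega_{k,N}(\t,z)$ of weight-$k$ cusp forms on $\G_0(N)$. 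Crucially, the multiplicativity of Hecke eigenvalues together with the Atkin--Lehner theory available for squarefree $N$ is what lets the two functionals in $X$ and in $Y$ separate, and lets the whole Hecke sum be written \emph{without} reference to the individual eigenforms $f$.

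First I would substitute for $\omega_{k,N}$ its Poincar\'e-series representation, a sum over $\G_0(N)$ of elementary terms rational in $\t$ and $z$, and carry out the two period integrals $\int_0^\infty(\,\cdot\,)(X-z)^{k-2}\,dz$ term by term after unfolding. Each unfolded integral is elementary, so the result is a sum over cosets of an explicit rational function of $X,Y,\t$. Assembling these over $k$ with the weight $T^{k-2}/(k-2)!$ from \eqref{Defgenfn} converts the rational summands into Laurent coefficients of Jacobi theta quotients; this is precisely the mechanism by which the level-one computation of \cite{Z1991} produces the formula \eqref{Zmain3}, and I would reuse that evaluation for the part of the sum supported at the cusp $\infty$.

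The genuinely new feature at level $N$ is that the decomposition of $\G_0(N)$ inside $\G_1$ --- equivalently, the two $\G_0(N)$-orbits of cusps $\infty$ and $0$, exchanged by the Fricke involution $\t\mapsto-1/N\t$ when $N$ is squarefree --- splits the kernel into two factors: one carrying the variables $XT,YT$ at modular argument $\t$, and one carrying $NXYT$ and $T$ at modular argument $N\t$. Passing from $\t$ to $N\t$ on the orbit of $0$ is exactly what inserts the factors of $N$ and replaces $\theta_\t$ by $\theta_{N\t}$ in the second factor. Concretely, I would recognize the right-hand side of \eqref{mm}, using the oddness of $\theta$, as (up to an explicit sign) a product $F_\t(XT,YT)\,F_{N\t}(NXYT,-T)$ of two level-one Kronecker--Eisenstein kernels $F_\t(u,v)\df\theta'_\t(0)\,\theta_\t(u+v)/\!\left(\theta_\t(u)\,\theta_\t(v)\right)$, whose Laurent expansions in $u,v$ generate the Eisenstein series $G_j(\t)$ and $G_l(N\t)$. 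Matching these expansions with the two period functionals identifies the closed form, and the polar term $\tfrac{(X+Y)(NXY-1)}{NX^2Y^2T^2}$ together with the Eisenstein contributions to $C_{k,N}$ appear as the $T^{-2}$ boundary data of the $F$'s at $u,v=0$.

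I expect the main obstacle to be this first, kernel-identification step rather than the theta-function algebra that follows. Three points demand care. (i) The Eisenstein series and the oldforms must be built into $\BB_{k,N}$ so that the full Hecke sum --- not merely its cuspidal part --- reproduces the kernel; this is where the definition of $R_f$ for Eisenstein $f$ (deferred to \S2) and the regularization of the divergent period integrals must be calibrated against the $T^{-2}$ term. (ii) The separation of the $X$- and $Y$-functionals rests on Hecke multiplicativity and on the Atkin--Lehner operators being involutions, so squarefreeness of $N$ enters essentially here and the argument genuinely breaks for non-squarefree $N$. (iii) For small $k$ the Poincar\'e sum is not absolutely convergent, so interchanging it with the period integration needs an analytic-continuation or Hecke-smoothing argument, exactly as in \cite{Z1991}. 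Once the kernel is correctly pinned down, its evaluation as the product of two Kronecker kernels is a formal consequence of the two-variable theta identity already used in the level-one case.
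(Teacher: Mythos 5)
There is a genuine gap, and it lies exactly where you locate the heart of your argument: the identification of the cuspidal part of \eqref{Main1} as ``the two period functionals applied to the holomorphic reproducing kernel $\omega_{k,N}(\t,z)$.'' This identification does not match up. The reproducing kernel has two variables: applying the functional $g\mapsto r_g^{\rm od}(Y)$ in the variable $z$ yields $\sum_f f(\t)\,r_f^{\rm od}(Y)/\langle f,f\rangle$, one period polynomial per eigenform; applying the second functional then consumes the $\t$-variable and produces $\sum_f r_f^{\rm ev}(X)\,r_f^{\rm od}(Y)/\langle f,f\rangle$, which has no $\t$-dependence at all. Neither object is the cuspidal part of $C_{k,N}$, which carries the triple product $r_f^{\rm ev}(X)\,r_f^{\rm od}(Y)\,f(\t)$ against a \emph{single} factor $1/\langle f,f\rangle$. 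No application of period functionals to the Bergman kernel can produce a product of two periods of the same form $f$; what is needed instead is a modular form $B_{k,N}(X,Y,\cdot)$ in $\t$ whose Petersson product with each Hecke cusp form $f$ is a multiple of $r_f^{\rm ev}(X)r_f^{\rm od}(Y)+r_f^{\rm od}(X)r_f^{\rm ev}(Y)$, i.e.\ a kernel for \emph{products of two periods}. Such kernels are not Poincar\'e representations of $\omega_{k,N}$ but Rankin--Cohen brackets of two Eisenstein series: the Laurent expansion \eqref{Laur} of the two Kronecker factors in \eqref{BN} exhibits the theta product as a sum of terms $(X^{k_1-1}+Y^{k_1-1})(1-(NXY)^{k_2-1})(XY)^m\,g^{(N)}_{k_1,k_2,m}(\t)$, and the pairing $\langle f,\,g^{(N)}_{k_1,k_2,m}\rangle$ is computed by Rankin--Selberg unfolding against an Eisenstein series as in \eqref{RS}, not by unfolding a Poincar\'e series for the evaluation kernel. (Your reading of the right-hand side of \eqref{mm} as $F_\t(XT,YT)\,F_{N\t}(T,-NXYT)$, and of the two factors as attached to the two Atkin--Lehner orbits of data at $\t$ and $N\t$, is correct and agrees with the paper's equation \eqref{BN}.)

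Second, even with the correct kernel in hand, the level-$N$ substance is missing at the point where you write that ``multiplicativity of Hecke eigenvalues together with Atkin--Lehner theory lets the two functionals separate.'' Concretely: Rankin--Selberg applies to brackets of the form $[\,g,\,E^{(\infty)}_{k_2,N}]_m$ and gives $L(f*g,\,k-m-1)$, and for $N>1$ this convolution equals $A(s)\,L(f,s)L(f,s-k_1+1)/\zeta(2s-k-k_1+2)$ with a nontrivial correction factor $A(s)$ supported at primes $p\mid N$, whose shape depends on whether the Hecke form $f$ (which may be an oldform) is $p$-new or $p$-old. Moreover the bracket actually occurring in the theta expansion, $[G_{k_1},G_{k_2}\circ N]_m$, is \emph{not} of the form $[\,g,\,E^{(\infty)}_{k_2,N}]_m$. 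The proof therefore has to construct, for each Atkin--Lehner character $\e$, the specific Eisenstein series $G^{\e,k_2}_{k_1,N}=\sum_{d\mid N}\e(d)\,d^{(k_1-k_2)/2}\,G_{k_1}\circ d$ of \eqref{defGk} with the two properties \eqref{GkProp} and \eqref{conv}: its bracket has the same pairings with $S_{k,N}^\e$ as $[G_{k_1},G_{k_2}\circ N]_m$, and the factor $A(s)$ equals $1$ precisely at $s=k-m-1$, verified prime by prime in the three cases $p\nmid N$, $p\mid N_1$ ($p$-new) and $p\mid N_2$ ($p$-old), using \eqref{newLfactor} and \eqref{oldLfactor}. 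This choice, and the fact that it works simultaneously for all old and new Hecke forms, is the actual content of the level-$N$ theorem; invoking multiplicativity and squarefreeness does not by itself produce it, and the unfolding you propose would never encounter, let alone resolve, this issue. Likewise the Eisenstein/polar calibration in your point (i) is not mere ``boundary data at $u,v=0$'': it requires the cusp-by-cusp computation of Section~\ref{Eis}, matching Proposition~\ref{CNinf} against the Atkin--Lehner transforms of the theta product.
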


This theorem gives a simple way to find the polynomial $R_f(X,Y)$ in $\overline\Q[X,Y]$ for any 
Hecke cusp form~$f$ of weight~$k$ on~$\G_0(N)$ if the Hecke basis~$\BB_{k,N}$ is supposed known, just by
decomposing the coefficient of $T^{k-2}$ in the theta-product in~\eqref{mm} with respect to that basis and taking the 
coefficient of~$f$. In~\cite{Z1991} it was shown that for $N=1$ a stronger statement holds: even though the 
construction of the generating function involves summing over all Hecke forms in each weight and therefore apparently
has destroyed information about the individual terms, an easy algebraic lemma implies that the coefficient of~$T^{k-2}$
in the symmetrized generating function $\CC(X,Y,\t,T)+\CC(-X,Y,\t,T)$ has a {\it unique} decomposition as a sum of 
$\,\dim M_k(\G_1)\,$ terms of the form~$f(\t)r(X)s(Y)$ with $r(X)$ even and~$s(Y)$ odd, and therefore the formula uniquely
determines the Hecke eigenforms on~$\G_1$ themselves, as well as their period polynomials. The corresponding statement 
cannot be true for the generating function of Theorem~\ref{Thm1} in general, because for all but a few levels the space
of modular forms of weight~$k$ on~$\G_0(N)$ has a larger dimension than the space of polynomials of degree~$\le k-2$.  
(We plan to return to this question in a later paper, where Theorem~\ref{Thm1} is generalized to include more period 
polynomials for each eigenform.)  However, for small~$N$ this obstruction is not present and one can in fact prove
the exact analogue of this second result also:
\begin{theorem}\label{Thm2} For $N\in\{2,3,5\}$ the identity in Theorem~\ref{Thm1} completely
determines the Fourier coefficients and periods of all Hecke forms on $\Gamma_0(N)$.
\end{theorem}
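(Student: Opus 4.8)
The plan is to convert the analytic identity~\eqref{mm} into a purely linear–algebraic reconstruction problem and then to show that, for the three levels in question, the relevant linear algebra is rigid. First I would expand the right–hand side of~\eqref{mm} as a power series in $T$, which produces each $C_{k,N}(X,Y,\t)$ as an explicit element of $\Q[[q]][X,Y]$, and then symmetrize in $X$: replacing $\CC_N(X,Y,\t,T)$ by $\CC_N(X,Y,\t,T)+\CC_N(-X,Y,\t,T)$ annihilates the odd–in–$X$ part of each $R_f$ and leaves, in the coefficient of $T^{k-2}$, a sum of products
\[ \Phi_{k,N}(X,Y,\t) \= \sum_{f\in\BB_{k,N}} c_f\,f(\t)\,r_f^{\rm ev}(X)\,r_f^{\rm od}(Y) \]
with nonzero scalars $c_f$. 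The entire question is whether this triple–product structure on the right is forced by the (explicitly known) left–hand side: if it is, then reading off the individual rank–one pieces recovers each eigenform $f$, hence its Fourier coefficients, together with the pair $\bigl(r_f^{\rm ev},r_f^{\rm od}\bigr)$, i.e.\ its periods.

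The reconstruction rests on an elementary uniqueness statement, the same "easy algebraic lemma" used for $N=1$: if the power series $\{f\}_{f\in\BB_{k,N}}$ are linearly independent and the two polynomial families $\{r_f^{\rm ev}\}$ and $\{r_f^{\rm od}\}$ are each linearly independent, then the expression of $\Phi_{k,N}$ as a sum of $\#\BB_{k,N}$ terms $f(\t)\,r(X)\,s(Y)$ with $r$ even and $s$ odd is unique up to reordering and the scaling hidden inside each factor. Concretely one views $\Phi_{k,N}$ as a matrix over $\Q[[q]]$ indexed by the monomials $X^{2i}$ in the rows and $Y^{2j+1}$ in the columns and checks that its row and column spans pin down the two families. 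The linear independence of the $q$–series $f$ is automatic since $\BB_{k,N}$ is a basis, so the whole content of Theorem~\ref{Thm2} reduces to the assertion that, for $N\in\{2,3,5\}$ and every weight $k$, the even period polynomials $r_f^{\rm ev}$ of the forms $f\in\BB_{k,N}$ are linearly independent, and likewise the odd period polynomials $r_f^{\rm od}$.

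To establish this independence I would argue by a dimension comparison. The polynomials $r_f^{\rm ev}$ lie in the space of even polynomials of degree $\le k-2$, of dimension $\tfrac k2$, and the $r_f^{\rm od}$ in the odd space of dimension $\tfrac k2-1$, whereas $\#\BB_{k,N}=\dim M_k(\G_0(N))$ grows like $\tfrac1{12}\,[\G_1:\G_0(N)]\,k$. Hence independence can hold at all only when $[\G_1:\G_0(N)]\le 6$, and among squarefree levels this singles out exactly $N=1,2,3,5$; for every larger squarefree level (already $N=6$ or $N=7$) there are too many forms to fit independently, the decomposition is no longer unique, and the theorem genuinely fails. For $N\in\{2,3,5\}$ the counts match, so the inequality must be upgraded to an equality: I would invoke the Eichler–Shimura theory for $\G_0(N)$ to identify the span of the $r_f^{\rm ev}$ (resp.\ $r_f^{\rm od}$) with the full even (resp.\ odd) period space and thereby conclude that the period maps $f\mapsto r_f^{\rm ev}$ and $f\mapsto r_f^{\rm od}$ are injective on $\BB_{k,N}$; for these three small levels one can alternatively verify this directly from the explicit low–dimensional structure of $M_*(\G_0(N))$.

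The hard part will be exactly this uniform injectivity, which is delicate in two respects. First, it has to hold for every weight $k$ simultaneously, including the low weights where the asymptotic count is not yet decisive and where the Eisenstein contributions (whose $R_f$ is defined separately in~\S2) and the boundary terms in Eichler–Shimura must be tracked exactly rather than up to leading order. Second, $\BB_{k,N}$ contains oldforms as well as newforms—for the prime levels $N=2,3,5$ these are the two copies $f(\t)$ and $f(N\t)$ of each level–one eigenform—and one must check that the even and odd period polynomials of these oldforms remain linearly independent among themselves and from the newform periods; this is where the squarefree hypothesis and the explicit arithmetic of the three levels are really used. Once injectivity is secured uniformly in $k$, the uniqueness lemma applies verbatim and Theorem~\ref{Thm2} follows.
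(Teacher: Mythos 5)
Your first half coincides with the paper's own reduction: symmetrizing and applying the uniqueness lemma of~\cite{Z1991} reduces Theorem~\ref{Thm2} to the injectivity of the maps $f\mapsto r_f^{\rm ev}$ and $f\mapsto r_f^{\rm od}$, the Eisenstein part being computed in advance and subtracted off. One correction, though: as literally stated, your claim that the $r_f^{\rm ev}$ for $f\in\BB_{k,N}$ (Eisenstein series included) are linearly independent is false. For $N=5$, $k=8$ the Hecke basis has five elements while $\dim\V_6^{\rm ev}=4$; concretely, the even periods of $f_8$ and $f_8^\sigma$ differ by a multiple of $125X^6-1$, which is proportional to the even period of $G_{8,5}^+$. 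The reduction only works for the cuspidal part, separated into Atkin--Lehner eigenspaces (the spaces $\V_{w,p}^{{\rm ev},\pm}$ intersect trivially), after the Eisenstein contribution has been removed --- which is exactly how the paper sets it up.

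The genuine gap is your proof of injectivity. The dimension count is only a necessary condition (it rules out $N\ge 6$ but proves nothing for $N\le 5$), and ``invoking Eichler--Shimura for $\G_0(N)$'' does not close it: Eichler--Shimura makes the \emph{full} parabolic cocycle $\g\mapsto r_{f,\g}$ injective, not its single value $r_f$ --- and $r_f$ is not even a cocycle value on $\G_0(N)$, since no element of $\G_0(N)$ interchanges the cusps $0$ and $\infty$. The paper therefore passes to the Fricke group $\G_0^*(p)$, identifies $S_{k,p}^\e$ with $S_k(\G_0^*(p),\chi_\e)$, and uses its explicit presentation: for $p=2,3$ one has $\G_0^*(p)=\la T,W_p\ra$, so a parabolic cocycle vanishing on $T$ is determined by $r_f$, and a reflection argument (via $\t\mapsto-\bar\t$) separates the even and odd parts. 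For $p=5$ the intermediate statement your argument needs is simply false: $\G_0^*(5)$ requires three generators $A,B,W$, and the map sending a parabolic cocycle to its value on $W_5$ has a one-dimensional kernel, spanned by the class of $\bigl(0,(5X^2-5X+1)^{(k-2)/2}\bigr)$, whenever $\e=(-1)^{k/2}$ --- despite the dimensions ``matching.'' Injectivity of the period map holds only because the Eichler--Shimura image happens to avoid this kernel, and proving that requires the Haberland-type scalar-product formula~\eqref{Hab5}; no dimension comparison or abstract cohomological statement can detect it. (Your fallback of verifying the claim ``directly from the explicit structure of $M_*(\G_0(N))$'' is also not a proof, since the statement must hold in every weight.) So the architecture of your proposal matches the paper, but the step you yourself flag as ``the hard part'' is precisely where the paper's real work --- and, for $p=5$, a genuinely new ingredient --- lies.
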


The paper is organized as follows.  In Section~\ref{PerPolys} we give the precise definitions and main 
properties of all of the quantities occurring in Theorem~\ref{Thm1} and~\ref{Thm2}, including the definitions 
of the Petersson scalar product $\la f,f\ra$ and of the period polynomial $r_f(X)$ when $f$ is an Eisenstein series.
In the next section we compute the Eisenstein part of the generating function~\eqref{Main1} and use this to show
that the left- and right-hand sides of~\eqref{mm} agree at all cusps of~$\G_0(N)$.  This reduces the proof of
Theorem~\ref{Thm1} to the statement that the scalar products of any $f\in\BB_{k,N}^{\rm cusp}$ with the two
sides of~\eqref{mm} agree. In Section~\ref{Level1} we review the notion of Rankin-Cohen brackets and the formula 
expressing the periods of a cusp form in terms of its scalar products with Rankin-Cohen brackets of Eisenstein series,
and give a simplified presentation of the proof of~\eqref{Zmain3} given in~\cite{Z1991}. The generalizations of these
results to all squarefree levels~$N$ are then given in the following section, completing the proof of Theorem~\ref{Thm1}.
Along the way we also obtain a formula for the eigencomponents of $C_{k,N}$ with respect to the action of the
group of Atkin-Lehner operators (Theorem~\ref{Thm3}, Section~\ref{Eis})  
In Section~\ref{Examples} we give numerical examples illustrating Theorem~\ref{Thm1} for small levels and also showing 
both how Theorem~\ref{Thm2} works for the levels stated and how it fails for modular forms of level~7.  The proof of 
Theorem~\ref{Thm2}, together with some further information about modular forms of these small levels, is given 
in the final section.  

This paper is based on an earlier paper by the first two authors alone, completed in~2014, in which a result 
equivalent to Theorem~\ref{Thm1} was proved for prime levels.  It was then found by the third author that 
after some reorganization the same proof worked for arbitrary squarefree levels.

\section{Modular forms on $\G_0(N)$ and their periods}  \label{PerPolys}

In this section we give the complete definitions of all of the quantities appearing in the statements of 
the theorems given in the introduction, including the canonical basis of Hecke forms for~$\G_0(N)$
and the definition of the Eisenstein part of the generating series~\eqref{genfn1} and~\eqref{Defgenfn}.
None of the material is new, but we have included full details for the reader's convenience.

Our notations for modular forms are standard.  We write~$\H$ for the upper half-plane, 
$q=e^{2\pi i\tau}$ for a generic variable $\tau\in\H$, and $f(\t)=\sum a_n(f)\,q^n$ for the Fourier
expansion of a modular form~$f$. For~$k\in2\Z$ we define the ``slash operator" $|_k$ by
$$(f|_{k} \gamma)(\tau):=(ad-bc)^{k/2} (c\tau+d)^{-k}f(\frac{a\tau+b}{c\tau+d})\,, $$ 
for $\gamma = \pm\sm a & b\\ c& d\esm \in PGL_2^+(\mathbb{R})$ and for any function
$f:\H\rightarrow\C$.  Thus a modular form of weight~$k$ on a subgroup $\G\subset PGL_2^+(\mathbb{R})$ 
satisfies $f|_k\g=f$ for \hbox{all~$\g\in\G$}.

We denote~by $M_k$ and $S_k$ the spaces of modular forms and cusp forms, 
respectively, on the full modular group, and by $M_{k,N}$ or $M_k(\Gamma_0(N))$ and $S_{k,N}$ or 
$S_k(\Gamma_0(N))$ the spaces of modular forms and cusp forms, respectively, on the group $\Gamma_0(N)$, 
where $N$ will always denote a squarefree number and where $\G_0(N)$ (and later its normalizer $\G_0^*(N)$)
will always be considered as subgroups of $PGL_2^+(\mathbb{R})$.  The space $M_{k,N}$ has three main decompositions:
into Eisenstein series and cusp forms, into new forms and various types of old forms, and into eigenspaces 
for the Atkin-Lehner involutions.  Each of them has a particularly simple form because $N$ is squarefree. 

We start with the decomposition $M_{k,N}=M_{k,N}^{\rm Eis}\oplus S_{k,N}$, where $M_{k,N}^{\rm Eis}$
is the space spanned by the Eisenstein series of weight~$k$ and level~$N$.  The group~$\G_0(N)$ has 
$\nu(N)=2^t$ cusps, where $\nu(N)$ denotes the number of divisors of~$N$ and $t$ its number of prime 
factors. For $k\ge4$ the space $M_{k,N}^{\rm Eis}$ has the same dimension and a basis given by the functions
$G_k(d\t)$ with $d|N$, where $G_k=-\frac{B_k}{2k}+\sum_{n\ge1}\sigma_{k-1}(n)\,q^n$ is the normalized Eisenstein
series of level~1.  If~$k=2$, then these functions still span $M_{k,N}^{\rm Eis}$, but now this space has
dimension only $\nu(N)-1$, since the form $G_2(\t)$ is only quasimodular and the linear combination 
$\sum_{d|N}c_d\,G_2(d\t)$ is modular only if $\sum c_d/d=0$.

We next turn to old and new forms. By an {\it old form} on $\G_0(N)$ we mean any linear combination of functions 
$f(d\t)$ where $f$ is a modular form of level~$N_1$ with $N_1$ a strict divisor of~$N$ and $d$ a divisor of~$N/N_1$.  
Thus all Eisenstein series are old (and even ``very old," coming all the way from level~1) if $N>1$ and $k\ge4$.
We will consider them to be old also if $N>1$ and~$k=2$, since $G_2$ is quasimodular of level~1 and they
are therefore old as quasimodular forms as well as being modular of level~$N$. For $N>1$ the space of {\it new forms}
of weight~$k$ on~$\G_0(N)$ is a subspace of $S_{k,N}$ and is defined there as the space of forms that are orthogonal 
with respect to the Petersson scalar product to all old forms.  (For $N=1$ the new forms are simply the whole 
space~$M_k$.) We then have the decomposition
  $$  S_{k,N} \=   \bigoplus_{N_1|N} \bigoplus_{d|N/N_1} S_{k,N_1}^{\rm new}\bigl|_k\j_d\,, $$
where $\j_d=\sm \!d&0\\\!0&1\!\esm$, so that $f|_k\j_d(\t)=d^{k/2}f(d\t)$.  As in the
introduction, we define a {\it Hecke form} in $M_{k,N}$ to be a simultaneous eigenform~$f$ of all 
Hecke operators $T_n$ with $(n,N)=1$, normalized by~$a_1(f)=1$, in which case $f|_kT_n=a_n(f)f$
for all $n$ prime to~$N$.  In particular, $G_k$ is a Hecke form for $N=1$. The finite set $\BB_{k,N}^{\rm new}$ 
of Hecke forms in $M_{k,N}^{\rm new}$ (which are sometimes called the {\it newforms}, written with no space) 
forms a basis of this space. The set of all $f(d\t)$ with $f\in\BB_{k,N_1}^{\rm new}$ and $dN_1|N$
(with $d=1$ omitted and $f(d\t)$ replaced by $dG_2(d\t)-G_2(\t)$ if $k=2$ and~$N_1=1$) then forms
a basis of $M_{k,N}$.  But this is not a good choice, since its elements neither have multiplicative Fourier
coefficients nor are mutually orthogonal.  To get a better basis we must use the Atkin-Lehner operators.

Denote by $\Dg(N)$ the set of divisors of~$N$, made into a group isomorphic to $(\Z/2\Z)^t$ by the 
multiplication $N_1\star N_2=N_1N_2/(N_1,N_2)^2$. If $M\in\Dg(N)$, then since $(M,N/M)=1$ 
we can find $W_M\in\sm M\Z&\Z\\N\Z&M\Z\esm$ with \hbox{$\det(W_M)=M$}. The $W_M$ is called the Atkin-Lehner involution. Any two such matrices differ on the left
(and also on the right) by an element of~$\G_0(N)$, so the function $f|_kW_M$
for $f\in M_{k,N}$ is independent of the choice of matrix~$W_M$ and again belongs to~$M_{k,N}$.  This
defines an action of the group $\Dg(N)$ on $M_{k,N}$ and an eigenspace decomposition 
$M_{k,N}=\bigoplus_{\e\in\Dg(N)^\vee} M_{k,N}^\e$, where the sum ranges over the characters of $\Dg(N)$
(i.e., the homomorphisms $\e:\Dg(N)\to\{\pm1\}$) and where $M_{k,N}^\e$ is the space of $f\in M_{k,N}$ with
$f|_kW_M=\e(M)f$ for all $M|N$.  
This eigenspace decomposition is compatible with the splitting into new and old forms in two different
senses.  On the one hand, since the Atkin-Lehner involutions commute with the Hecke operators $T_n$ for 
\hbox{$(n,N)=1$}, every newform is automatically an eigenfunction of the group~$\Dg(N)$.  This means that the basis 
$\BB_{k,N}^{\rm new}$ of $M_{k,N}^{\rm new}$ is the union over all $\e\in\Dg(N)^\vee$ of the subset 
$\BB_{k,N}^{\rm new,\,\e}$ of newforms belonging to $M_{k,N}^\e$ (all of which are cusp forms except 
for $f=G_k$ in the case $N=1$, $\e=1$, $k\ge4$).  On the other hand, for each decomposition $N=N_1N_2$ and each 
character $\e_2\in\Dg(N_2)^\vee$, we have a linear map (cf.~\cite{CK2009}, Prop.~2)
  $$ \L_{k,N_2}^{\e_2}:\, M_{k,N_1}\,\to \,M_{k,N}, \;\quad
    \L_{k,N_2}^{\e_2}\bigl(M_{k,N_1}^{\e_1}\bigr)\;\subset\;M_{k,N}^{\e_1\e_2} \quad\bigl(\forall \e_1\in\Dg(N_1)^\vee\bigr) $$
given by
\be\label{defLkN}  \L_{k,N_2}^{\e_2}(f) \= f\,{\Bigl|}_{k}\Bigl(\sum_{d|N_2}\e_2(d)\,\j_d\Bigr)
 \=f\,{\Bigl|}_{k}\Bigl(\sum_{d|N_2}\e_2(d)\,W_d\Bigr) \,, \ee
where the second equality holds because $W_d^{\vphantom{-1}}\j_d^{-1}\in \G_0(N_1)$ if~$d|N_2$. It then follows
by induction on~$t$, the number of prime factors of~$N$, that
 $$ M_{k,n}^\e \= \bigoplus_{N=N_1N_2} \L_{k,N_2}^{\rho_2(\e)}\Bigl(M_{k,N_1}^{\rm new,\,\rho_1(\e)}\Bigr) 
  \quad\;\bigl(\e\in\Dg(N)^\vee,\; \rho_i(\e):=\e|_{\Dg(N_i)}\bigr)\,.$$
Together these two statements imply that $M_{k,N}^\e$ has a basis $\BB_{k,N}^\e$ given by
$$ \BB_{k,N}^\e \= \coprod_{N=N_1N_2}\Bigl\{ \L_{k,N_2}^{\rho_2(\e)}(f)\,\Big|\,
  f\in \BB_{k,N_1}^{\rm new,\,\rho_1(\e)}\Bigr\} $$
for every $\e\in\Dg(N)^\vee$.  The union of these bases for all $\e\in\Dg(N)^\vee$ is the basis $\BB_{k,N}$ of $M_{k,N}$
occurring in Theorem~\ref{Thm1}.  We have $\BB_{k,N}=\BB^{\rm Eis}_{k,N}{\scriptstyle\coprod}\BB^{\rm cusp}_{k,N}$ with 
$\BB^{\rm Eis}_{k,N}$ consisting of the functions $G_{k,N}^\e:=\L_{k,N}^\e(G_k)$ for all $\e\in\Dg(N)^\vee$ except 
$\e=1$ in the case~$k=2$.  We also observe that the group of Atkin-Lehner involutions~$W_M$ permutes the $2^t$ cusps 
of~$\G_0(N)$ simply transitively, and that the group $\G_0^*(N)$ generated by $\G_0(N)$ and all of the~$W_M$,
which is the normalizer of $\G_0(N)$ in $PGL_2^+(\R)$, has only one cusp. This will be used later.

For each Hecke form $f$ the $L$-series $L(f,s)=\sum_{n=1}^\infty a_n(f)\,n^{-s}$ has an Euler product 
expansion $L(f,s)=\prod_p L(f,p^{-s})_p$, where the product is over all primes and where each factor
$L(f,X)_p$ is a rational function of~$X$. If $f\in M_{k,N}^\e$ is a newform then these functions have the form
\be\label{newLfactor} L(f,X)_p \= \begin{cases} \bigl(1\m a_p(f)X\+p^{k-1}X^2\bigr)^{-1} & \text{ if $p\nmid N$}, \\
  \bigl(1\+\e(p)\,p^{k/2-1}\,X\bigr)^{-1}& \text{ if $p\mid N$}, \end{cases} \ee
while for an oldform $f=\L_{k,N_2}^{\e_2}(f_1)$ with $f_1\in\BB_{k,N_1}^{\rm new,\,\e_1}$ we have
\be\label{oldLfactor} L(f,X)_p \= L(f_1,X)_p\,\cdot\,\begin{cases} 1 & \text{ if $p\nmid N_2$,} \\
  1+\e_2(p)\,p^{k/2}\,X & \text{ if $p\mid N_2$.} \end{cases} \ee
(This includes the case when $N_1=1$ and $f_1=G_k$, in which case $L(f_1,X)_p=(1-X)^{-1}(1-p^{k-1}X)^{-1}$.)
Combining these statements gives a description of $L(f,X)_p$ for all $f\in\BB_{k,N}$ and all primes~$p$ that
will be used later.

Finally, we discuss, first in the case of cusp forms, the two quantities $\la f,f\ra$ and $r_f(X)$ appearing 
in the definition of $R_f(X,Y)$.  The former, of course, denotes the Petersson scalar product of~$f$ with itself, 
but we should emphasize that in our normalization this scalar product is defined as the integral of 
$|f(x+iy)|^2y^{k-2}dx\,dy$ over a fundamental domain for the group~$\G_1$ in the case of eq.~\eqref{Zmain2}
and for the group~$\G_0(N)$ in the case of eq.~\eqref{Main1}.  When we are discussing $\G_0(N)$, we shall
always use $\la\,\cdot\,,\,\cdot\,\ra$ to denote the scalar product with respect to that group, so that if
a form $f\in S_{k,N}$ happens to be modular on $\G_0(N_1)$ for some proper divisor $N_1$ of~$N$ then $\la f,f\ra$
is $[\G_0(N_1):\G_0(N)]$ times the scalar product of $f$ with itself with respect to~$\G_0(N_1)$, which to 
avoid confusion we then denote by~$\la f,f\ra_{N_1}$. If $f\in\BB_{k,N}^\e$ has the form $\L_{k,N_2}^{\e_2}(f_1)$ 
for some $f_1\in\BB_{k,N_1}^{\e_1}$, where $N=N_1N_2$ and $\e=\e_1\e_2$ as above, then the two scalar products
$\la f,f\ra=\la f,f\ra_N$  and $\la f_1,f_1\ra_{N_1}$ are related by
\be\label{PSPRatio}  \la f,f\ra \= \la f_1,f_1\ra_{N_1} \,\cdot 
  \prod_{p|N_2} 2\,\Bigl(p\+\e_2(p)\,a_p(f)\,p^{1-k/2}\+1\Bigr)\,, \ee
as one can show in several ways, e.g.~by applying the Rankin-Selberg method to express $\la f,f\ra$ as
a multiple of the residue of the Rankin-Selberg zeta function $\sum_n a_n(f)^2\,n^{-s}$ at $s=k$ and then
using the relationship~\eqref{oldLfactor} to relate that zeta function to the corresponding one for~$f_1$.

The period polynomial $r_f$ is defined for a cusp form $f$ of weight $k$ and any level by~\eqref{rfdef} and 
belongs to the space $\mathbb{V}_{k-2} := \la 1,X, \dots, X^{k-2}\ra$ of~$\C[X]$. We have
$r_f(X)=\sum_{n=0}^{k-2} (-1)^n \sm k-2\\n \esm r_n(f) X^{k-2-n}$ with ``periods" $r_n(f)$ defined by
  \be\label{rnf}   r_n(f) \df \int_0^\infty f(\tau)\,\tau^n\,d\tau \= \frac{n!\,i^{n+1}}{(2\pi)^{n+1}}L(f,n+1)
  \qquad　(0\le n\le k-2)\,. \ee
As in the introduction, we write
$r_f^{\rm ev}(X)$ and $r_f^{\rm od}(X)$ for the even and odd parts of $r_f(X)$ and write ``ev/od"
for statements that apply to both parities, e.g., $r_f^{\rm ev/od} \in \mathbb V_{k-2}^{\rm ev/od}$
with an obvious notation.  The group $PGL_2^{+}(\mathbb{R})$ acts on $\mathbb{V}_{k-2}$ by $|_{2-k}$.  
In particular, for the Fricke involution $W_N=\sm0&-1\\N&0\esm \in PGL_2^+(\R)$, a simple calculation shows 
that we have $r_{f|_kW_N}=-r_f|_{2-k}W_N$ (the minus sign arises because $W_N$ interchanges 0 and~$\infty$), so
\be f\,\in\,S_{k,N}^\e \quad \Rightarrow \quad r_f\,\in\,
  \V_{k-2,N}^\e \,:=\, \text{Ker}\bigl(1+\e(N)\, W_N,\V_{k-2}\bigr)\,. \ee
Equivalently, $r_{k-2-n}(f)=(-1)^{n+1}\e(N)N^{k/2-1-n}\,r_n(f)$, corresponding to the 
functional equation of the $L$-series of~$f$.  Another simple calculation shows that $r_{f|_kV_d}=r_f|_{2-k}V_d$
for all~$d|N$ (this time with a plus sign, because the matrix $V_d=\sm d&0\\0&1\esm$ fixed the endpoints~0
and~$\infty$ of the integral~\eqref{rfdef}).  Together with~\eqref{defLkN} this implies the relationship
\be\label{relPer}  f=\L_{k,N_2}^{\e_2}(f_1) \;\Rightarrow\; r_f(X)\= \sum_{d|N_2}\e_2(d)\,d^{1-k/2}\,r_{f_1}(dX)\ee
between the period polynomial of an old form and the period polynomial of the new form of lower level from which
it is induced.  Finally, for later purposes we mention that $r_f(X)$ for $f\in S_{k,N}^\e$ can also be given by
\be  \label{Newrfdef} 
 r_f(X) \=  \widetilde f(X,\t) \m \e(N)\,N^{k/2-1}\,X^{k-2}\,\widetilde f\Bigl(-\frac1{NX},\,-\frac1{N\t}\Bigr)\ee
for any $\t\in\H$, where $\widetilde f(X,\t)$ is the truncated version of~\eqref{rfdef} defined by
\be \label{rtildedef}\widetilde f(X,\t) \= \int_\t^\infty f(\t')\,(X-\t')^{k-2}\,d\t' \qquad(\t\in\H)\,.  \ee

If $f$ is a  Hecke form, then it is known that there are non-zero numbers $\omega_f^{\rm ev}\in\R$, 
$\omega_f^{\rm od}\in i\R$ such that the coefficients of $r_f^{\rm ev}(X)/\omega_f^{\rm od}$ and 
$r_f^{\rm od}(X)/\omega_f^{\rm ev}$ and the number $\omega_f^{\rm ev}\omega_f^{\rm od}/i\la f,f \ra$ 
belong to the number field $\mathbb{Q}_f$ generated by the Fourier coefficients of $f$, and transform 
by $\sigma$ if $f$ is replaced by $f^{\sigma} = \sum_{n\geq 1}\sigma \bigl(a_n(f)\bigr)\,q^n$ with
$\sigma \in {\rm Gal}(\overline{\Q}/\Q)$.  (See \cite{M1} and Chapter~V of~\cite{L}.) 
For instance,  for the unique newform $f=q-8q^2+12q^3+64q^4+\cdots$ in $S_8(\G_0(2))$ we have
$r_f^{\rm ev}(X)/\omega_f^{\rm od}=8X^6 - 34 X^4 + 17X^2 -1$, 
$r_f^{\rm od}(X)/\omega_f^{\rm ev}=4X^5 - 5X^3 + X$ and 
$\omega_f^{\rm ev}\omega_f^{\rm od}/i\la f,f \ra=32/17$  with $\omega_f^{\rm od}=0.001759\cdots i$
and $\omega_f^{\rm ev}=0.01049\cdots$.  It follows just as in the level~one case that the polynomials $R_f(X,Y)$
defined by~\eqref{Rfdef} have coefficients in~$\Q_f$ for $f\in\BBc_{k,N}$ and that the cuspidal part 
of $\CC_N(X,Y,\t,T)$ belongs to $\Q[X,Y][[q,T]]$. 

For the theorem, we also need to treat the case of non-cusp forms. Here neither the integral
defining $r_f(X)$ nor that defining $\la f,f\ra$ converges, but in~\cite{Z1991} extensions of
both quantities were defined, the main differences with the cuspidal case being that $r_f(X)$
no longer belongs to $\mathbb{V}_{k-2}$ but to the slightly bigger space
\bes \hV_{k-2} \= \la X^{-1},\,1,\, \dots, \,X^{k-1}\ra \, 
  \= \mathbb{V}_{k-2}\,\oplus\,\C\cdot X^{-1}\,\oplus\,\C\cdot X^{k-1}\,. \ees
and that $\la f,f\ra$ can be negative.  The definitions in both cases are simple: the
Petersson product $\la f,f\ra$ is defined as the same multiple of the residue at $s=k$
of $\sum a_f(n)^2\,n^{-s}$ as in the cuspidal case (it turns out that this $L$-series
has a simple pole at $s=k$ whether $f$ is cuspidal or not), and the period ``polynomial"
$r_f(X)$ is defined by the same formula~\eqref{Newrfdef} as in the cuspidal case, which
is again independent of the choice of~$\t\in\H$, but with $\widetilde f(X,\t)$ now defined by
\bes \widetilde f(X,\t) \,=\, \int_\t^\infty \bigl(f(\t')-a_0(f)\bigr)\,(X-\t')^{k-2}\,d\t'\+
      a_0(f)\,\frac{(X-\t)^{k-1}}{k-1} \ees
rather than by~\eqref{rtildedef}.  The coefficients of the Laurent polynomial $r_f(X)$ are related to
the special values of the $L$-series of~$f$ essentially as before. Since the $L$-series of $G_k$ is 
just $\zeta(s)\zeta(s-k+1)$, it is simple to use these definitions to calculate the contribution 
$C_k^{\rm Eis}(X,Y,\t)=\frac1{(k-2)!}R_{G_k}(X,Y)G_k(\t)$ of the Eisenstein series $G_k\in\BB_{k,1}$ 
to $C_k(X,Y,\t)$.  The result, given in~\cite{Z1991}, is 
  \be\label{CkEis1}
  C_k^{\rm Eis}(X,Y,\t) \= \bigl[(1-X^{k-2})Q_k(Y) + (1-Y^{k-2})Q_k(X)\bigr]\, E_k(\t)\,,\ee
where $E_k(\t)=G_k(\t)/G_k(\infty)=1-\frac{2k}{B_k}q-\cdots (B_k=$ the $k$th Bernoulli number) is the Eisenstein series normalized to have
the value~1 at~$\infty$,  where  
\be \label{defQk} Q_k(X) \= \sum_{\substack{r,\,s\ge0\text{ even}\\ r+s=k}}\frac{B_r}{r!}\,\frac{B_s}{s!}\,X^{r-1}
   \;\quad\in\quad \hV_{k-2}^{\rm odd} \;. \ee
(Notice that the condition ``$r$ and $s$ even" is not needed unless $k=2$, in which case the 
expression~\eqref{CkEis1} is zero anyway.)  From the definitions just given, it is clear that
both the results~\eqref{PSPRatio} and~\eqref{relPer} relating the Petersson norm and period polynomial
of an oldform $f=\L_{k,N_2}^{\e_2}(f_1)$ to the corresponding invariants of~$f_1$ remain true in the
non-cuspidal case, so taking $f_1=G_k$, $N_2=N$ and $\e_2=\e$ one immediately gets the corresponding
results for the Eisenstein case.  The result will be given in the next section, in which we compute the
total contribution of all of the Eisenstein series in $M_{*,N}$ to the generating function~$\CC_N$.

This completes our discussion of the definitions and main properties of all of the quantities appearing
in Theorem~\ref{Thm1}.

\section{Eisenstein series on $\G_0(N)$ and their periods}  \label{Eis}

Before proceeding, we introduce a notational convention that will be useful both for the proofs and for the
discussions of the numerical examples.  This is to decompose each
odd function of $X$ and~$Y$ as the sum of its ``even-odd" part and its ``odd-even part" (obtained by
interchanging $X$ and~$Y$), and to denote the first of these by the corresponding German (fraktur) letters.
Thus we write $R_f(X,Y)$ as $\Rg_f(X,Y)+\Rg_f(Y,X)$ with
  \be\label{Revod} \Rg_f(X,Y) \,=\, \frac{R_f(X,Y)+R_f(-X,Y)}2  
 \,=\, \frac{r_f^{\rm ev}(X)\,r_f^{\rm od}(Y)}{(2 i)^{k-3} \,\langle f,\, f\rangle}\ee
and similarly $C_k(X,Y,\t)=\Cg_k(X,Y,\t)+\Cg_k(Y,X,\t)$, etc., for the corresponding generating functions.  Then we have, for example,
\be \label{defevodgenfn} \Cg_N(X,Y,\t,T)\=\frac{NX^2-1}{NX^2YT^2}\+\sum_{k=2}^\infty \Cg_{k,N}(X,Y,\t)\,T^{k-2}\;. \ee

It is also convenient to introduce the notation $\BC_N(X,Y,\t,T)$ for the right-hand sides of~\eqref{mm}, so that
the statement of Theorem~\ref{Thm1} can be written simply as $\CC_N=\BC_N$.  The object of this section is to show that 
at least the Eisenstein parts of $\CC_N$ and~$\BC_N$ agree, i.e., that the difference between $\CC_N(X,Y,\t,T)$ and $\BC_N(X,Y,\t,T)$
vanishes in all of the cusps.  Since these cusps are all obtained from the cusp at infinity by applying 
Atkin-Lehner involutions, as discussed in~\S\ref{PerPolys}, it is enough for this to show that
 \be\label{CEisB}  (\CC_N|W_M)(X,Y,\infty,T)  \=  (\BC_N|W_M)(X,Y,\infty,T)  \qquad\forall\;M\in\Dg(N)\,,  \ee
where $|W_M$ is the operator given by applying $|_kW_M$ (with respect to the variable~$\t$) to the coefficient 
of~$T^{k-2}$ for each~$k\ge0$. To prove~\eqref{CEisB}, we will calculate both generating series independently and show their equality.  

To calculate the left-hand side of~\eqref{CEisB} we need only the contribution of the Eisenstein series.
As we have already discussed, the space $M_{k,N}^{\rm Eis}$ of Eisenstein series of weight~$k$ on $\G_0(N)$ 
has dimension $\nu(N)=2^t$ if \hbox{$k>2$} (and one less if~$k=2$).  It has three natural bases: the forms
$(G_k\circ d)(\t):=G_k(d\t)$ with $d$ ranging over~$\Dg(N)$, the forms $G_{k,N}^\e:=\L_{k,N}^\e(G_k)$
with $\e$ ranging over~$\Dg(N)^\vee$, and the forms $E_{k,N}^{(P)}$ with $P$ ranging over the cusps
of~$\G_0(N)$, where $E_{k,N}^{(P)}$ denotes the Eisenstein series that equals~1 (in a suitable sense)
at~$P$ and~0 at all of the other cusps of~$\G_0(N)$. Since the group of Atkin-Lehner involutions acts
simply transitively on the cusps of~$\G_0(N)$, the Eisenstein series $E_{k,N}^{(P)}$ for the cusp
$P=W_M(\infty)$ with $M|N$ (which is the cusp represented by any rational number whose denominator has
g.c.d.~$N/M$ with~$N$) can be taken simply to be~$E_{k,N}^{(\infty)}|_k^{\vphantom\e}W_M$, where
\be\label{EisInf} E_{k,N}^{(\infty)} \= \sum_{\g\,\in \G_0(N)_\infty \backslash\G_0(N)} 1\,\Bigl|_k\g
  \= \frac{1}{\prod_{p|N}(p^k-1)}\,\sum_{d|N}\mu\Bigl(\frac Nd\Bigr)\,d^k\,E_k\circ d \ee
with $\mu(d)\,=$ M\"obius function and $E_k(\t)=\frac{G_k(\t)}{G_k(\infty)}$ as before.  In this section
we will need only the basis of Hecke forms $G_{k,N}^\e$, but the proof of Theorem~\ref{Thm1} in~\S\ref{Proof}
will use all three bases: the Eisenstein series associated to the cusps of $\G_0(N)$ are the ones needed to
apply the Rankin-Selberg method, the Eisenstein series $G_k\circ d$ coming directly from level one are the ones 
that will appear in the expansion of~$\BC_N$ as a sum of Rankin-Cohen brackets, and the Hecke forms $G_{k,N}^\e$ 
are the convenient ones when we need to exploit the orthogonality with respect to the Petersson product of modular
forms on~$\G_0(N)$ with different Atkin-Lehner eigenvalues.

We begin by computing the Eisenstein part of $C_{k,N}(X,Y,\t)$, which we can write with the convention
introduced above as $\Cg^{\rm Eis}_{k,N}(X,Y,\t)+\Cg^{\rm Eis}_{k,N}(Y,X,\t)$, where
\be\label{CgkN} \Cg^{\rm Eis}_{k,N}(X,Y,\t) \= \frac1{(k-2)!}\,\sum_{\e\in\Dg(N)^\vee} \Rg_{G_{k,N}^\e}(X,Y)\,G_{k,N}^\e(\t)\,. \ee
(In principle, we should add the condition ``$\e\ne1$ if $k=2$" to the summation, but this is not necessary since the
symmetry property of periods implies that $r_f^{\rm ev}(X)=0$ and hence $R_f(X,Y)=0$ for all~$f\in M_{2,N}^\e$ 
\hbox{if~$\e(N)=1$.)}  We can compute each summand in this expression directly from the level~one formula~\eqref{CkEis1} 
together with equations~\eqref{PSPRatio} and~\eqref{relPer} applied to the case \hbox{$N_1=1$}, $N_2=N$, $f_1=G_k$. 
The first of these, together with the formula $a_p(G_k)=p^{k-1}+1$ for $p$~prime, gives
$$ \frac{\la G_{k,N}^\e,\,G_{k,N}^\e\ra}{\la G_k,G_k\ra} 
  \= 2^t\,\prod_{p|N}\bigl(1\+\e(p)p^{k/2}\bigr)\bigl(1\+\e(p)p^{1-k/2}\bigr)\,.$$
The second can be written symbolically as $r_{G_{k,N}^\e}=\L_{2-k,N}^\e(r_{G_k})$ with an obvious notation, and since
$$  \L_{2-k,N}^\e\bigl(1-X^{k-2}\bigr) 
  \= \bigl(1 \,-\, \e(N)\,N^{k/2-1}X^{k-2} \bigr)\,\cdot\,\prod_{p|N}\bigl(1+\e(p)p^{1-k/2}\bigr) $$
by an easy calculation (consisting of interchanging the divisors $d$ and $N/d$ in the sum defining the
coefficient of $X^{k-2}$), we obtain from~\eqref{CkEis1} the equation
$$ \frac{\Rg_{G_{k,N}^\e}(X,Y)}{(k-2)!}\,G_{k,N}^\e(\t) \=
   \frac{1-\e(N)N^{k/2-1}X^{k-2}}{2^t\,\prod_{p|N}(1+\e(p)p^{k/2})} \, \L_{2-k,N}^\e(Q_k(Y))
   \, \frac{G_{k,N}^\e(\t)}{G_k(\infty)}\,. $$
If we now substitute into this the value
$$ (G_{k,N}^\e|_kW_M)(\infty) \= (\e(M)G_{k,N}^\e)(\infty) \= \e(M)\,\sum_{d|N}\e(d)d^{k/2}\,G_k(\infty)\;,$$
then the denominator cancels and we obtain 
\begin{align*} & \frac{\Rg_{G_{k,N}^\e}(X,Y)}{(k-2)!}\,(G_{k,N}^\e|_k^{\vphantom\e}W_M)(\infty) 
    \=  2^{-t}\,\Bigl(\e(M)\,-\,\e(N/M)\,N^{k/2-1}X^{k-2} \Bigr)   \\
   &\qquad \quad \times\, \Biggl( \sum_{\substack{r,\,s\ge0\text{ even}\\ r+s=k}}
   \frac{B_r}{r!}\,\frac{B_s}{s!}\,\sum_{d|N} \e(d)\,d^{(r-s)/2}\,Y^{r-1}\Biggr)\;.   \end{align*}
We now insert this formula into~\eqref{CgkN} and sum over all $\e\in\Dg(N)^\vee$,
using the identity $2^{-t}\sum_{\e}\e(d)\e(d')=\delta_{d,d'}$ for~$d,\,d'\in\Dg(N)$, to find
\begin{align*} (\Cg_{k,N}|_kW_M)(X,Y,\infty) \= \sum_{\substack{r,\,s\ge0\text{ even}\\ r+s=k}}
   \frac{B_r}{r!}\,\frac{B_s}{s!} &  \Bigl[ M^{\frac{r-s}2}-N^{r-1}M^{\frac{s-r}2}X^{k-2}\Bigr]\,Y^{r-1}\,.\end{align*} 
Substituting this in turn into~\eqref{defevodgenfn} and using the standard generating function identity
$2\sum_{\text{$r\ge0$ even}} B_rt^{r-1}/r!= \coth(t/2)$, we obtain 
$$ 4\,(\Cg_N|W_M)(X,Y,\infty,T) \= \coth\tfrac{\sqrt MYT}2\,\coth\tfrac T{2\sqrt M}
  \,-\, \coth\tfrac{NXYT}{2\sqrt M}\,\coth\tfrac{\sqrt MXT}2\,. $$
The symmetrization of this in $X$ and $Y$ is a product of two differences of hyperbolic cotangents, and using 
the identity $\,\coth a-\coth b=\dfrac{\sinh(b-a)}{\sinh(a)\sinh(b)}$
we obtain finally the following result describing the values of $\CC_N$ at all cusps:
\begin{proposition}\label{CNinf} For any divisor $M$ of $N$, we have
\begin{align*} & \qquad\qquad (\CC_N|W_M)(X,Y,\infty,T) \;\= \\
& \frac{\sinh(\sqrt M(X+Y)T/2)\,\sinh((1-NXY)T/2\sqrt M)}
  {4\,\sinh(\sqrt MXT/2)\,\sinh(\sqrt MYT/2)\,\sinh(T/2\sqrt M)\,\sinh(NXYT/2\sqrt M)} \;. \end{align*} 
\end{proposition}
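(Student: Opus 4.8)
The plan is to follow exactly the chain of computations already set up in this section, since the Proposition is really the endpoint of a calculation that the preceding text has almost entirely carried out. The strategy is to compute $(\CC_N|W_M)(X,Y,\infty,T)$ by working purely on the Eisenstein side: the value of $\CC_N$ at any cusp depends only on the Eisenstein part $\Cg^{\rm Eis}_{k,N}$, because cusp forms vanish at all cusps and contribute nothing to a constant term after applying $|_kW_M$. I would start from the formula
\be
(\Cg_{k,N}|_kW_M)(X,Y,\infty) \= \sum_{\substack{r,\,s\ge0\text{ even}\\ r+s=k}}
   \frac{B_r}{r!}\,\frac{B_s}{s!}\,\Bigl[ M^{\frac{r-s}2}-N^{r-1}M^{\frac{s-r}2}X^{k-2}\Bigr]\,Y^{r-1}
\ee
derived above, which is the even-odd piece; the full $(\Cg_N|W_M)$ is its $T$-generating function together with the leading polar term $\frac{NX^2-1}{NX^2YT^2}$ from~\eqref{defevodgenfn}.

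The key step is then to recognize the four resulting double sums as products of hyperbolic cotangents. Summing over $k$ with the generating function identity $2\sum_{r\ge0\text{ even}} B_r t^{r-1}/r!=\coth(t/2)$ separately in the two variables attached to $r$ and $s$, the term $M^{(r-s)/2}Y^{r-1}$ produces $\coth(\sqrt M\,YT/2)\coth(T/2\sqrt M)$ and the term $N^{r-1}M^{(s-r)/2}X^{k-2}Y^{r-1}$ produces $\coth(NXYT/2\sqrt M)\coth(\sqrt M\,XT/2)$, so that
\be
4\,(\Cg_N|W_M)(X,Y,\infty,T) \= \coth\tfrac{\sqrt MYT}2\,\coth\tfrac T{2\sqrt M}
  \,-\, \coth\tfrac{NXYT}{2\sqrt M}\,\coth\tfrac{\sqrt MXT}2\,.
\ee
The one point requiring care here is the bookkeeping of the powers of $M$ and $N$ and their square roots, together with verifying that the residual polar term matches the constant of the cotangent expansion; this is exactly the sort of exponent-matching that is routine but error-prone.

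To finish, I would symmetrize in $X$ and $Y$, since $\CC_N=\Cg_N(X,Y)+\Cg_N(Y,X)$ by the convention of~\eqref{Revod}. Writing $A=\coth\tfrac{\sqrt MYT}2$, $B=\coth\tfrac{T}{2\sqrt M}$, etc., the symmetrization $(\CC_N|W_M)=(\Cg_N|W_M)(X,Y)+(\Cg_N|W_M)(Y,X)$ rearranges into a product of two \emph{differences} of cotangents, one involving the pair $\{\sqrt M XT/2,\ \sqrt M YT/2\}$ and the other the pair $\{T/2\sqrt M,\ NXYT/2\sqrt M\}$. Applying the identity $\coth a-\coth b=\sinh(b-a)/(\sinh a\,\sinh b)$ to each difference collapses everything into the single quotient of four $\sinh$ factors in the denominator against the two $\sinh$ factors $\sinh(\sqrt M(X+Y)T/2)$ and $\sinh((1-NXY)T/2\sqrt M)$ in the numerator, yielding the claimed formula. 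The main obstacle throughout is bookkeeping rather than conceptual: keeping the $\sqrt M$ and $N$ factors straight under the symmetrization and confirming that the cross terms cancel so that a product of two cotangent differences—rather than a sum of four cotangent products—actually emerges.
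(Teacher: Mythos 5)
The part of the argument you actually carry out is correct and coincides step-for-step with how the paper concludes: the reduction to the Eisenstein part (cusp forms vanish at all cusps, so they contribute nothing after applying $|_kW_M$ and letting $\t\to\infty$), the resummation over $k$ via $2\sum_{r\ {\rm even}}B_rt^{r-1}/r!=\coth(t/2)$ giving
\[
4\,(\Cg_N|W_M)(X,Y,\infty,T)\;=\;\coth\tfrac{\sqrt MYT}2\,\coth\tfrac T{2\sqrt M}\;-\;\coth\tfrac{NXYT}{2\sqrt M}\,\coth\tfrac{\sqrt MXT}2
\]
(with the $(r,s)=(0,0)$ term accounting for the polar part of~\eqref{defevodgenfn}), and the symmetrization in $X,Y$ followed by the cotangent-difference identity. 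The genuine gap is at your starting point. The formula for $(\Cg_{k,N}|_kW_M)(X,Y,\infty)$ that you cite as ``derived above'' is not a prior result of the paper available outside this proof: it is itself the core of the paper's own proof of the proposition, and deriving it is where nearly all of the work lies. Concretely, one must (i) write the Eisenstein part as in~\eqref{CgkN}, as a sum of $\Rg_{G_{k,N}^\e}(X,Y)\,G_{k,N}^\e(\t)$ over $\e\in\Dg(N)^\vee$; (ii) compute $\Rg_{G_{k,N}^\e}$ from the level-one formula~\eqref{CkEis1} by applying~\eqref{PSPRatio} and~\eqref{relPer} with $N_1=1$, $f_1=G_k$, which requires $a_p(G_k)=p^{k-1}+1$ and the evaluation $\L_{2-k,N}^\e(1-X^{k-2})=(1-\e(N)N^{k/2-1}X^{k-2})\prod_{p|N}(1+\e(p)p^{1-k/2})$; (iii) evaluate the cusp values $(G_{k,N}^\e|_kW_M)(\infty)=\e(M)\sum_{d|N}\e(d)d^{k/2}G_k(\infty)$, whereupon the normalizing denominators cancel; and (iv) sum over $\e$ using the orthogonality $2^{-t}\sum_\e\e(d)\e(d')=\delta_{d,d'}$, which picks out exactly the divisors $d=M$ and $d=N/M$ and produces the bracket $M^{(r-s)/2}-N^{r-1}M^{(s-r)/2}X^{k-2}$. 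Compressing (i)--(iv) into a single citation means your proposal supplies only the final third of a proof.

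A smaller but substantive point: carried out exactly, your last step does \emph{not} land on the formula as printed. With $a=T/2\sqrt M$ and $b=NXYT/2\sqrt M$, the identity $\coth a-\coth b=\sinh(b-a)/(\sinh a\,\sinh b)$ produces $\sinh\bigl((NXY-1)T/2\sqrt M\bigr)$ in the numerator, the \emph{negative} of $\sinh\bigl((1-NXY)T/2\sqrt M\bigr)$. The same sign arises when one lets $\t\to\infty$ in $\BC_N|W_M$, and only that sign is consistent with the polar term $(X+Y)(NXY-1)/NX^2Y^2T^2$ of~\eqref{Defgenfn} and with the factor $\theta_{\frac NM\t}\bigl(\frac{(NXY-1)T}{\sqrt M}\bigr)$ appearing in Theorem~\ref{Thm3}. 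So the sign in the printed statement is a typo in the paper; a careful write-up should derive the $(NXY-1)$ form and note the discrepancy, rather than assert that the computation yields the claimed formula verbatim.
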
 

\null \bigskip

To complete the proof of $\CC_N^{\rm Eis}=\BC_N^{\rm Eis}$, we also have to compute the value of $\BC_N$ at
all cusps.  But this is much easier. We can write the definition of $\BC_N(X,Y,\t,T)$, the right-hand side
of~\eqref{mm}, as
\be \label{BN}  \BC_N(X,Y,\t,T) \= F_\t(XT,YT)\,F_{N\t}(T,-NXYT)\,, \ee
where
\be\label{Fth} F_\t(u,\,v) \= \frac{\theta'_\t(0)\,\theta_\t(u\+v)}{\theta_\t(u)\,\theta_\t(v)} \ee
with $\theta_\t$ as in~\eqref{thetadef}.  The function~$F_\t$ was defined and studied in \cite{Z1991} 
(but in fact already by Kronecker, as the author learned later) and will be used again in the next section. 
Here we need only its modular transformation property
\be\label{Fmod} F_{\frac{a\t+b}{c\t+d}}\Bigl(\frac u{c\t+d},\,\frac v{c\t+d}\Bigr)
 \= (c\t+d)\,\exp\Bigl(\frac{cuv/2\pi i}{c\t+d}\Bigr)\,F_{\t}(u,v) \ee
for $\left(\begin{smallmatrix}a & b \\ c & d\end{smallmatrix}\right) \in SL_2(\Z)$, which follows immediately
from the definition and from the standard transformation property of $\theta_\t$ with respect to
the modular group. From the definition of $\,|W_M\,$ we have
\be\label{BW} (\BC_N|W_M)(X,Y,\t,T) \= \frac M{(c\t+d)^2}\,
  \BC_N\biggl(X,\,Y,\,\frac{a\t+b}{c\t+d},\,\frac{\sqrt MT}{c\t+d}\biggr)\,,\ee
where　$\bigl(\begin{smallmatrix}a & b \\ c & d\end{smallmatrix}\bigr)$ is a matrix representing~$W_M$,
i.e., an integer matrix satisfying $M|a$, $N|c$, $M|d$ and $ad-bc=M$.  We have
$W_M = \alpha_M\j_M$ and $\j_NW_M = M\alpha_M^*\j_{N/M}$, where 
$V_n=\bigl(\begin{smallmatrix}n&0\\0&1\end{smallmatrix}\bigr)$ as in~\S\ref{PerPolys} and where
$\alpha_M=\bigl(\begin{smallmatrix}a/M & b \\ c/M & d\end{smallmatrix}\bigr) $
and $\alpha^*_M=\bigl(\begin{smallmatrix}a & Nb/M \\ c/N & d/M\end{smallmatrix}\bigr)$ both belong to $SL_2(\Z)$. 
Inserting~\eqref{BN} into~\eqref{BW} and using~\eqref{Fmod} for $\alpha_M$ and $\alpha^*_M$ we obtain
 $$ (\BC_N|W_M)(X,Y,\t,T)  \,=\, F_{M\t}\bigl(\sqrt MXT,\,\sqrt MYT\bigr)\,
 F_{\frac NM\t}\Bigl(\frac T{\sqrt M},\,-\frac{NXYT}{\sqrt M}\Bigr)$$
after a short calculation. (The two exponential terms coming from~\eqref{Fmod} cancel.)
Now letting $\t\to\infty$ and observing that the limiting value at $\t=\infty$ of $\theta_\t(z)/\theta'_\t(0)$
equals $\,2\sinh(z/2)$, we find the same limiting value as the one given in Proposition~\ref{CNinf}.
This completes the proof of equation~\eqref{CEisB}.

The calculation just given also lets us refine Theorem~\ref{Thm1} to a formula for each eigencomponent of $\CC_N$ 
under the action of the group of Atkin-Lehner involutions, which is useful both theoretically and for computational 
purposes (as will be illustrated in Section~\ref{Examples}).  Indeed, once we have finished the proof of Theorem~\ref{Thm1} and 
shown that $\CC_N=\BC_N$, we can write the above formula for $(\BC_N|W_M)(X,Y,\t,T)$ as a formula for $(\CC_N|W_M)(X,Y,\t,T)$, 
and the average of these expressions over all divisors~$M$ of~$N$, weighted with the value of~$\e(M)$ for 
some~$\e\in\Dg(N)^\vee$, gives the $\e$-eigencomponent of~$\CC_N$.  We state the result in the following theorem, 
which will be proved as soon as Theorem~\ref{Thm1} is:
\begin{theorem}\label{Thm3} For each even integer~$k\ge2$ and for each homomorphism
$\e\in\Dg(N)^\vee$ define $C_{k,N}^\e$ by the same formula as in~\eqref{Main1} but with the sum
restricted to $f\in\BB_{k,N}^\e$.  Then the four-variable generating function 
$$ C_N^\e(X,Y,\t,T) \df 
 \delta_{\e,1}\,\frac{(X+Y)(NXY-1)}{NX^2Y^2T^2}\+\sum_{k=2}^\infty C_{k,N}^\e(X,Y,\t)\,T^{k-2}$$
can be evaluated in terms of the theta function~\eqref{thetadef} as
$$ \frac1{2^t}\sum_{M|N}\e(M)\,
 \frac{\theta_{M\t}'(0)\,\theta_{M\t}\bigl(\sqrt M(X+Y)T\bigr)}{\theta_{M\t}(\sqrt MXT)\,\theta_{M\t}(\sqrt MYT)} \cdot
 \frac{\theta_{\frac NM\t}'\bigl(0)\,\theta_{\frac NM\t}\bigl(\frac{(NXY-1)T}{\sqrt M}\bigr)}
   {\theta_{\frac NM\t}\bigl(\frac{NXYT}{\sqrt M}\bigr)\,\theta_{\frac NM\t}\bigl(\frac T{\sqrt M}\bigr)\,}\,. $$
\end{theorem}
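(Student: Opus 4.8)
The plan is to deduce Theorem~\ref{Thm3} directly from Theorem~\ref{Thm1} together with the computation of $(\BC_N|W_M)$ already carried out above, using nothing more than the orthogonality relations for the characters of the finite abelian group $\Dg(N)\cong(\Z/2\Z)^t$. The key observation is that the averaging operator
$$ \Pi^\e \df \frac1{2^t}\sum_{M|N}\e(M)\,|W_M $$
is precisely the projection onto the $\e$-eigenspace: since $|W_M$ realizes the action of $M\in\Dg(N)$ and each basis element $f\in\BB_{k,N}$ lies in some $M_{k,N}^{\e_f}$ with $f|_kW_M=\e_f(M)f$, character orthogonality $\frac1{2^t}\sum_{M|N}\e(M)\e_f(M)=\delta_{\e,\e_f}$ shows that $\Pi^\e$ annihilates every basis element except those in $\BB_{k,N}^\e$, on which it acts as the identity.

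First I would apply $\Pi^\e$ to $\CC_N$ weight by weight. Because $|W_M$ acts only on the variable~$\t$ and leaves the polynomial $R_f(X,Y)$ untouched, the coefficient of $T^{k-2}$ in $\Pi^\e\CC_N$ is
$$ \frac1{(k-2)!}\sum_{f\in\BB_{k,N}}\Bigl(\frac1{2^t}\sum_{M|N}\e(M)\e_f(M)\Bigr)R_f(X,Y)\,f(\t)
 \= \frac1{(k-2)!}\sum_{f\in\BB_{k,N}^\e}R_f(X,Y)\,f(\t)\,, $$
which is exactly the function $C_{k,N}^\e(X,Y,\t)$ of the statement. The polar term needs a separate but trivial check: its coefficient of $T^{-2}$ is independent of~$\t$, so $|W_M$ (which is $|_0W_M$ in weight $k=0$) fixes it, and averaging with weights $\e(M)$ multiplies it by $\frac1{2^t}\sum_{M|N}\e(M)=\delta_{\e,1}$, matching the factor $\delta_{\e,1}$ in the theorem. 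Hence $\Pi^\e\CC_N=C_N^\e$, the left-hand side of Theorem~\ref{Thm3}.

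Next I would evaluate the same average on the analytic side. By Theorem~\ref{Thm1} we have $\CC_N=\BC_N$, and the computation performed above (valid in all four variables, not only at $\t=\infty$) gives the exact identity
$$ (\BC_N|W_M)(X,Y,\t,T) \= F_{M\t}\bigl(\sqrt M XT,\,\sqrt M YT\bigr)\,
   F_{\frac NM\t}\Bigl(\frac{T}{\sqrt M},\,-\frac{NXYT}{\sqrt M}\Bigr)\,. $$
Substituting the definition~\eqref{Fth} of $F_\t$ and using that $\theta_\t$ is odd in its argument (so the two sign changes produced by the arguments $-NXYT/\sqrt M$ in numerator and denominator cancel) rewrites each summand $(\CC_N|W_M)=(\BC_N|W_M)$ as the $M$-th term of the theta product in the statement. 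Averaging over $M|N$ with weights $\e(M)/2^t$ then reproduces the right-hand side of Theorem~\ref{Thm3} verbatim, which together with the previous paragraph completes the proof.

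A word on where the work lies: there is essentially no obstacle here beyond bookkeeping, since all of the analytic content is already contained in Theorem~\ref{Thm1} and in the transformation formula~\eqref{Fmod} underlying the evaluation of $(\BC_N|W_M)$. The only points demanding care are the confirmation that every element of the canonical basis $\BB_{k,N}$ is a genuine Atkin--Lehner eigenform, so that $\Pi^\e$ truly acts as a projection term by term (this is built into the construction of $\BB_{k,N}$ in~\S\ref{PerPolys}), and the separate handling of the polar term in weight zero.
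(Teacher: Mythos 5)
Your proposal is correct and is essentially the argument the paper itself gives (and defers): the paper proves Theorem~\ref{Thm3} by invoking $\CC_N=\BC_N$ from Theorem~\ref{Thm1}, reading the identity $(\BC_N|W_M)(X,Y,\t,T)=F_{M\t}(\sqrt MXT,\sqrt MYT)\,F_{\frac NM\t}(T/\sqrt M,-NXYT/\sqrt M)$ already established in Section~\ref{Eis} as a formula for $(\CC_N|W_M)$, and averaging over $M|N$ with weights $\e(M)/2^t$ to project onto the $\e$-eigencomponent. Your write-up merely makes explicit the details the paper leaves implicit (character orthogonality on the Atkin--Lehner eigenbasis $\BB_{k,N}$, the $\delta_{\e,1}$ factor for the polar term, and the sign cancellation from the oddness of $\theta_\t$), all of which check out.
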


\section{Rankin-Cohen brackets and periods of cusp forms of level~1}  \label{Level1}

Write $\BC_{k,N}(X,Y,\t)$ for the coefficient of~$T^{k-2}$ in $\BC_N(X,Y,\t,T)$.  Then Theorem~\ref{Thm1} says that 
$\BC_{k,N}=\CC_{k,N}$ for all~$k$. Since both $\BC_{k,N}$ and $\CC_{k,N}$ are the sum of their Eisenstein and their
cuspidal parts, and since we have just proved the equality of the Eisenstein parts agree, it suffices to prove that 
the cuspidal parts also agree.  This in turn is equivalent to showing that $\la f,\BC_{k,N}\ra=\la f,\CC_{k,N}\ra$
for every~$f$ in the basis $\BB_{k,N}^{\rm cusp}$ of $S_{k,N}$, and in view of the definition~\eqref{Main1}
of $\CC_{k,N}$ and the orthogonality of the elements of~$\BB_{k,N}^{\rm cusp}$ this is equivalent to the formula
\be\label{CcuspB}  \bigl\la f,\, \BC_{k,N}(X,Y,\,\cdot\,)\bigr\ra \= 
 \frac{r_f^{\rm ev}(X)\,r_f^{\rm od}(Y) \+ r_f^{\rm od}(X)\,r_f^{\rm ev}(Y)} {(2i)^{k-3}(k-2)!}\,.  \ee
Our proof of this equality will be modelled on the proof given in~\cite{Z1991} for the level~1 case.
The main ingredients of that proof were the Rankin-Cohen brackets of two modular forms and
their modifications when one or both of the arguments is replaced by the quasimodular Eisenstein
series~$G_2$, an identity of Rankin and Zagier expressing the Petersson product of a Hecke cusp form~$f$ with such 
a bracket as a product of periods of~$f$, and the Laurent expansion of the function $F_\t(u,v)$ defined in~\eqref{Fth}.
In this section we review each of these things and present the proof of~\eqref{CcuspB} for $N=1$ given in~\cite{Z1991}
in a form that is a little simpler than the one there and that makes the generalization to the case of arbitrary
squarefree level as simple as possible.  This generalization will then be carried out in the following section.

The {\it Rankin-Cohen bracket} of two holomorphic functions $F$ and $G$ in~$\H$ is the bilinear 
combination of derivatives
 $$ [F,\,G]_m^{(k_1,k_2)} \,=\! \sum_{\substack{m_1,\,m_2\ge0\\ m_1+m_2=m} }
  (-1)^{m_2}\binom{k_1+m-1}{m_2}\binom{k_2+m-1}{m_1}D^{m_1}(F)\,D^{m_2}(G), $$
where $k_1,\,k_2>0$, $m\ge0$ are integers and $D=D_\t=\frac1{2\pi i}\,\frac d{d\tau}=q\,\frac d{dq}\,$.
This definition was found by Cohen~\cite{Cohen}, who
proved that $[F|_{k_1}\,g,G|_{k_2\,}g]_m^{(k_1,k_2)} =[F,G]_m^{(k_1,k_2)}|_{k_1+k_2+2m}\,g$ for any 
\hbox{$g\in GL_2^+(\R)$}. In particular, if $F$ and~$G$ are modular of weights~$k_1$ and~$k_2$ on
some Fuchsian group, then $[F,G]_m^{(k_1,k_2)}$ is modular of weight $k=k_1+k_2+2m$ on the same group.
In the case when $F$ and~$G$ are Eisenstein series of level~1, one has the {\it modified  
Rankin-Cohen bracket}~\cite{Z1991} 
\be\label{MRCB}\begin{aligned} \bigl[G_{k_1},\,G_{k_2}\bigr]_m & \df \bigl[G_{k_1},\,G_{k_2}\bigr]_m^{(k_1,k_2)} \\
 & \qquad \+ \frac{\delta_{k_2,2}}2\,\frac{D^{m+1}(G_{k_1})}{m+k_1} 
 \+ \frac{(-1)^m\delta_{k_1,2}}2\,\frac{D^{m+1}(G_{k_2})}{m+k_2}\,, \end{aligned} \ee
which is still modular of the same weight~$k$ even if one or both of $k_1$ or~$k_2$  equals~2, in 
which case the corresponding Eisenstein series $G_{k_1}$ or~$G_{k_2}$ is only quasimodular.
If $f\in S_k$ is a Hecke cusp form of weight~$k$, then results of Rankin~\cite{R} and 
Zagier~\cite{Z1977,Z1991} say that the Petersson scalar product of~$f$ with
this modified Rankin-Cohen bracket is given in all cases by
\be\label{RZ} -\,(2i)^{k-1}\,\bigl\la f,\,[G_{k_1},G_{k_2}]_m\bigr\ra\= \binom{k-2}m\,r_m(f)\,r_{m+k_1-1}(f)\,.\ee
This formula, whose proof is based on the Rankin-Selberg convolution method, will be generalized to our case 
in the next section (Proposition~\ref{MRCBN}).

The other ingredient of the proof of~\eqref{Zmain3} in~\cite{Z1991} was a formula expressing the Laurent
coefficients at the origin of the meromorphic function $F_\t$ defined by~\eqref{Fth} as derivatives of 
Eisenstein series, namely
\be\label{Laur} F_\t(u,v) \= \sum_{k>0,\,m\ge-1} g_{k,m}(\t)\,\bigl(u^{k-1}\+v^{k-1}\bigr)\,(uv)^m \ee
with $g_{k,m}$ (which is non-zero only for $k$ even) defined by
\be\label{gkm}  g_{k,m}(\tau) = \begin{cases} \dfrac{-2\,D^m G_k(\t)}{m!(m+k-1)!} 
  &\text{if $m\ge0$}, \\  \qquad \delta_{k,2} & \text{if $m=-1$}. \end{cases}\ee

With these preparations, the proof of the identity~\eqref{CcuspB} in the case~$N=1$ is quite easy.  The basic 
observation is that the modified Rankin-Cohen bracket of Eisenstein series defined above, rescaled by a convenient
factor, can be written uniformly in all cases as
$$g_{k_1,k_2,m} \,:=\, \frac{4\,[G_{k_1},\,G_{k_2}]_m}{(k_1+m-1)!\,(k_2+m-1)!} 
  \,= \sum_{\substack{m_1,\,m_2\ge-1\\ m+m_2=m}} (-1)^{m_2}g_{k_1,m_1}\,g_{k_2,m_2}\,,$$
where the terms $m_1=-1$ and $m_2=-1$ correspond to the correction terms needed in the definition~\eqref{MRCB}
when $k_1$ or $k_2$ equals~2.  Inserting~\eqref{Laur} 
into~\eqref{BN} and comparing the coefficients of~$T^{k-2}$ on both sides, we therefore find
$$ B_{k,1}(X,Y,\t)  \,=\, \sum_{\substack{k_1,\,k_2>0,\;m\ge0\\ k_1+k_2+2m=k} } 
  (X^{k_1-1}+Y^{k_1-1})(1-(XY)^{k_2-1})(XY)^m\,g_{k_1,k_2,m}(\t)\;. $$
From this and~\eqref{RZ} we find that the scalar product of a Hecke form~$f\in\BBc_k$ with~$B_{k,1}$ is given by
\begin{align*} & (2i)^{k-3}(k-2)!\,\bigl\la f,\, B_{k,1}(X,Y,\,\cdot\,)\big\ra 
  \=  \sum_{\substack{k_1,\,k_2>0,\;m\ge0\\ k_1+k_2+2m=k} } \binom{k-2}m\,\binom{k-2}{m+k_1-1} \\
  & \qquad\qquad \cdot \,r_m(f)\,r_{m+k_1-1}(f)\,(X^{k_1-1}+Y^{k_1-1})(1-(XY)^{k_2-1})(XY)^m  \\
&\qquad\= \sum_{\substack{0\le i,\,j\le k-2\\ i\not\equiv j\;({\rm mod}\;2)} } \binom{k-2}i\,\binom{k-2}j\,r_i(f)\,r_j(f)\,X^i\,Y^j\\
&\qquad\= \; r_f^{\rm ev}(X)\,r_f^{\rm od}(Y)\+r_f^{\rm od}(X)\,r_f^{\rm ev}(Y)\,.
\end{align*}
(Here the second equality follows by breaking up the set of pairs $(i,j)$ with $i\not\equiv j\!\pmod 2$
into four subsets according as $i\gtrless j$ and $i+j\gtrless k-2$ and using the symmetry property
$r_{k-2-i}(f)r_{k-2-j}(f)=-r_i(f)r_j(f)$.)  This completes the proof of~\eqref{CEisB} for $N=1$ and hence
of the equality $\BC_1=\CC_1\,$.

\section{Proof of Theorem 1}  \label{Proof}

To carry out the corresponding proof in the case of squarefree level~$N$, we must define the extended Rankin-Cohen
brackets for all pairs of Eisenstein series of the same weight~$k$ and compute their scalar products with both
old and new Hecke forms $f$ in $S_{k,N}$ in terms of the periods of~$f$. We will need to work with all three
bases $\{G_k\circ d\mid d\in\Dg(N)\}$, $\{G_{k,N}^\e\mid\e\in\Dg(N)^\vee\}$ and 
$\{G_{k,N}^{(\infty)}|_kW_M\mid M\in\Dg(N)\}$ discussed in~\S\ref{Eis}.  As we already said there, each of these 
will be the best choice for some part of our calculation.  
In particular, the modified Rankin-Cohen bracket that we will need
is $[G_{k_1},G_{k_2}\circ N]_m$, which we define by exactly the same formula as in~\eqref{MRCB}
but with $G_{k_2}$ replaced by $G_{k_2}\circ N$. The basic statement that we need is the formula for 
its scalar product with Hecke cusp forms given by the following proposition.
\begin{proposition} \label{MRCBN} For $k_1,k_2>0$ even and $m\ge0$ the function
\begin{align}\label{defgNk1k2m} g^{(N)}_{k_1,k_2,m} 
  & \df \frac{4\,N^{-k_2/2}}{(k_1+m-1)!\,(k_2+m-1)!} \bigl[G_{k_1},\,G_{k_2}\circ N\bigr]_m \end{align}
is a modular form of weight $k=k_1+k_2+2m$ on~$\G_0(N)$, and its Petersson scalar product with any
Hecke cusp form $f\in\BBc_{k,N}$ is given by
\be\label{RZN} \la f,g^{(N)}_{k_1,k_2,m}\ra 
  \= \binom{k-2}m\,\binom{k-2}{m+k_1-1}\,\frac{r_m(f)\,r_{m+k_1-1}(f)}{(2i)^{k-3}(k-2)!}\;.  \ee
\end{proposition}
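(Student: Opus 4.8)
The plan is to treat the two assertions separately, proving modularity first and then the scalar-product formula~\eqref{RZN}, which is the real content.

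For modularity I would argue exactly as in the level-one case. Since $G_{k_1}$ is modular of level~$1$, hence of level~$N$, and $G_{k_2}\circ N$ is modular of weight~$k_2$ on~$\G_0(N)$, Cohen's theorem makes the plain bracket $[G_{k_1},G_{k_2}\circ N]_m^{(k_1,k_2)}$ modular of weight~$k$ on~$\G_0(N)$ whenever $k_1,k_2\ge4$. When $k_1=2$ or~$k_2=2$ the relevant Eisenstein series ($G_2$, respectively $G_2\circ N$) is only quasimodular, but its non-holomorphic completion differs from it by a multiple of~$1/y$, so the anomaly of $D^{m+1}$ of a weight-two form has the same shape as at level one; the correction terms built into~\eqref{MRCB} (with $G_{k_2}$ replaced by $G_{k_2}\circ N$) cancel it and restore modularity on~$\G_0(N)$. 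This is a routine transcription of the level-one verification.

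For~\eqref{RZN} the plan is to adapt the Rankin-Selberg proof of the level-one identity~\eqref{RZ}. Writing the modified bracket as a bilinear combination $\sum_{m_1+m_2=m}(\ast)\,D^{m_1}G_{k_1}\cdot D^{m_2}(G_{k_2}\circ N)$, I would compute $\la f,g^{(N)}_{k_1,k_2,m}\ra$ on~$\G_0(N)\backslash\H$ by unfolding one of the two Eisenstein factors (with the usual auxiliary parameter~$s$), so that the integral collapses to a strip around the corresponding cusp. There the $x$-integration pairs the Fourier coefficients of~$f$ with the divisor sums supplied by the other Eisenstein factor, while the $y$-integration is a Mellin transform turning the Rankin-Cohen derivative weights into $\Gamma$-factors. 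The resulting Rankin convolution factors, by the Hecke-eigenform property of~$f$ together with Ramanujan's identity (at the primes $p\nmid N$), as a product $L(f,s)\,L(f,s')$ of two shifted Hecke $L$-functions of~$f$; specializing~$s$ to the value dictated by~$m$ and passing from $L$-values to periods through~\eqref{rnf} then yields $r_m(f)\,r_{m+k_1-1}(f)$, with the two binomial coefficients and the constant in~\eqref{RZN} emerging from the elementary $\Gamma$-factor bookkeeping exactly as for $N=1$.

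The main obstacle is to show that all dependence on~$N$ cancels, so that the right-hand side of~\eqref{RZN} is literally the level-one expression with the same constants --- which is precisely what forces the normalization $N^{-k_2/2}$ in~\eqref{defgNk1k2m}. Because $G_{k_2}\circ N$ is supported on multiples of~$N$, the factorization of the Rankin convolution is governed at the primes $p\mid N$ by the non-generic local Euler factors of~$f$ recorded in~\eqref{newLfactor} and~\eqref{oldLfactor} rather than by Ramanujan's identity, and both the unfolding and these local factors carry powers of~$N$ and~$p$. One must check that $N^{-k_2/2}$ conspires with these bad-prime factors to remove every such power, uniformly for new and old Hecke forms $f\in\BBc_{k,N}$ --- for the oldforms using~\eqref{relPer} to relate the periods of~$f$ to those of the underlying newform. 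Carrying out this bad-prime cancellation, and thereby matching the normalization so that~\eqref{RZN} collapses to~\eqref{RZ} when $N=1$, is where essentially all the work lies; the Archimedean and good-prime parts of the computation are identical to those in~\cite{Z1977,Z1991}.
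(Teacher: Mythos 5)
Your modularity argument is acceptable (the paper in fact gets modularity for free from the theta-product expansion of $\BC_N$, but it remarks that this ``can be established in several other ways,'' and a Cohen-plus-quasimodular-correction check is one of them). The genuine gap is in the main step. You propose to compute $\la f,[G_{k_1},G_{k_2}\circ N]_m\ra$ by ``unfolding one of the two Eisenstein factors,'' but neither factor can be unfolded: the Rankin--Selberg method applies to a bracket against the Eisenstein series $E^{(\infty)}_{k_2,N}$ attached to a \emph{single} cusp --- that is exactly the shape of~\eqref{RS} --- whereas both $G_{k_1}$ and $G_{k_2}\circ N$ have nonzero constant terms at \emph{every} cusp of~$\G_0(N)$. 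To reach an unfoldable bracket one must decompose $G_{k_2}\circ N$ in the basis $\{E^{(\infty)}_{k_2,N}|_{k_2}W_M\}_{M|N}$ and move each Atkin--Lehner operator onto~$f$, which brings in the eigenvalues $\e(M)$ of~$f$; equivalently, as the paper does, one replaces the bracket, up to forms orthogonal to~$f$, by $G_{k_2}(\infty)\,[G_{k_1,N}^{\e,k_2},E^{(\infty)}_{k_2,N}]_m$ with the auxiliary Eisenstein series $G_{k_1,N}^{\e,k_2}=\sum_{d|N}\e(d)\,d^{(k_1-k_2)/2}\,G_{k_1}\circ d$ of~\eqref{defGk}, using orthogonality of Atkin--Lehner eigenspaces (equation~\eqref{GkProp}). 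This $\e$-dependent reduction is the central idea of the paper's proof of the proposition, and it is entirely absent from your plan; without it the convolution you unfold does not factor as a product of two shifted $L$-functions of~$f$.

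Second, your picture of the bad-prime bookkeeping is not the right one. Even with the correct auxiliary series, the factor $A(s)$ in~\eqref{factorA} is \emph{not} removed identically by the normalization $N^{-k_2/2}$ (which is only a harmless global constant): at a prime $p\mid N$ where $f$ is new one finds (paper's Case~2, equation~\eqref{LandA}) the extra factor $\bigl(1-p^{k_1+m-1-s}\bigr)\big/\bigl(1-p^{k+k_1-2-2s}\bigr)$, a non-constant rational function of~$p^{-s}$, and similarly in the $p$-old case. The proof works not because these factors cancel as Dirichlet series but because each of them takes the value~$1$ at the single point $s=k-m-1$ dictated by the bracket index~$m$ --- a pointwise evaluation that must be verified case by case ($p\nmid N$, $p$ dividing the new part of the level of~$f$, $p$ dividing the old part), and this verification is the substance of~\eqref{conv}. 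Relatedly, your proposed use of~\eqref{relPer} for oldforms is not the mechanism: formula~\eqref{RZN} holds uniformly for all $f\in\BBc_{k,N}$ in terms of the periods of~$f$ itself, and the oldform case is absorbed into the local Euler-factor computation through~\eqref{oldLfactor}, not by passing to the periods of the underlying newform.
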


Given this proposition, the proof of \eqref{CcuspB} follows exactly as in the level~1 case.  
The definition of $g^{(N)}_{k_1,k_2,m}$ can be rewritten in the form
$$ g^{(N)}_{k_1,k_2,m}(\t)\=\sum_{\substack{m_1,\,m_2\ge-1\\ m+m_2=m}}
  (-N)^{m_2} g_{k_1,m_1}(\t)\,g_{k_2,m_2}(N\t)\,,$$
so from~\eqref{Laur} we see just as before that the coefficient $B_{k,N}(X,Y,\t)$ of $T^{k-2}$ 
in~$\BC_N(X,Y,\t,T)$ has the expansion
$$ \sum_{\substack{k_1,\,k_2>0,\;m\ge0\\ k_1+k_2+2m=k} } 
  (X^{k_1-1}+Y^{k_1-1})(1-(NXY)^{k_2-1})(XY)^m\,g^{(N)}_{k_1,k_2,m}(\t)\;. $$
This already proves the first statement of the proposition above (which can, of course,
be established in several other ways), since the modularity properties of $\theta_\t$ or $F_\t$ imply that 
$B_{k,N}(X,Y,\t)$ is modular of weight~$k$ and level~$N$ in~$\t$. In combination with~\eqref{RZN} it gives
\begin{align*} & (2i)^{k-3}(k-2)!\,\bigl\la f,\, B_{k,N}(X,Y,\,\cdot\,)\big\ra 
  \=  \sum_{\substack{k_1,\,k_2>0,\;m\ge0\\ k_1+k_2+2m=k} } \binom{k-2}m\,\binom{k-2}{m+k_1-1} \\
  & \qquad\qquad \cdot \,r_m(f)\,r_{m+k_1-1}(f)\,(X^{k_1-1}+Y^{k_1-1})(1-(NXY)^{k_2-1})(XY)^m  \\
&\qquad\= \sum_{\substack{0\le i,\,j \\ i\not\equiv j\;({\rm mod}\;2)} } \binom{k-2}i\,\binom{k-2}j\,r_i(f)\,r_j(f)\,X^i\,Y^j\\
&\qquad\=\; r_f^{\rm ev}(X)\,r_f^{\rm od}(Y)\+r_f^{\rm od}(X)\,r_f^{\rm ev}(Y)
\end{align*}
for any $f\in\BBc_{k,N}$ by exactly the same computation as before except that now the symmetry property
of the periods used is $r_{k-2-m}(f)r_{m+k_2-1}(f)=-N^{k_2-1}r_m(f)r_{m+k_1-1}(f)$.  This completes the 
proof of Theorem~\ref{Thm1} assuming equation~\eqref{RZN}, so it remains only to prove this equation.
 
The main tool needed for this proof is the Rankin-Selberg method.  In its simplest form this
gives the formula $\la f,gE_{k_2}\ra\,=\,\frac{(k-2)!}{(4\pi)^{k-1}}\,L(f*g,k-1)$ for any $f\in S_k$ and 
any $g\in M_{k_1}$ with real Fourier coefficients, where $k=k_1+k_2$ and $L(f*g,s)$ denotes the convolution 
$L$-series $\,\sum_{n=1}^\infty a_n(f)\,a_n(g)\,n^{-s}\,$ (or its meromorphic continuation). 
More generally, it was shown in~\cite{Z1977} that
\be\label{RS} \langle f, \bigl[g, E^{(\infty)}_{k_2,N} \bigr]_m \rangle 
  \=  \frac{(k-2)!\,(k_2+m-1)!}{(4\pi)^{k-1}(k_2-1)!}\,L(f*g,\,k-m-1) \,, \ee
for any~$N$, any $g\in M_{k_1,N}$ with real coefficients, any $f\in S_{k,N}$ and any $m\ge0$, where now $k=k_1+k_2+2m$.
If $N=1$, the cusp form $f$ is a Hecke eigenform, and the function $g$ is the Eisenstein series $G_{k_1}$, then 
a well-known elementary calculation (which we will repeat below) shows that 
$L(f*g,s)\,=\,\frac{L(f,s)L(f,s-k_1+1)}{\zeta(2s-k-k_1+2)}$, and this in conjunction with the relation between
the periods of~$f$ and the special values of its \hbox{$L$-function} at arguments $s\in\{1,\dots,k-1\}$ gives the
equation~\eqref{RZ} used for the proof of Theorem~\ref{Thm1} in the level~1 case.  For the case of
squarefree level~$N$, a similar formula holds in principle for any Hecke form $f\in\BB_{k,N}^{\rm cusp}$
and any $g\in M_{k_1,N}^{\rm Eis}$, but with two new aspects: first of all, $f$ can be an old- or newform
and we must treat both cases, and secondly, the formula for the convolution of $f$ and~$g$ now has the form
\be\label{factorA}  L(f*g,s) \= A(s)\,\frac{L(f,s)L(f,s-k_1+1)}{\zeta(2s-k-k_1+2)}\,,\ee
where $A(s)$ is a rational function of all $p^s$ with prime $p$ dividing~$N$ that 
depends on the particular Eisenstein series~$g$ chosen. Also, if we compare~\eqref{RS} with the 
equation~\eqref{RZN} that we want to prove, then we see that the Rankin-Cohen bracket occurring
in~\eqref{defgNk1k2m} does not have the form $[g,E_{k_2,N}^{(\infty)}]_m$ that we need. 
To complete the proof, we therefore must find a specific choice of the Eisenstein series~$g$ such that on the one
hand the Petersson scalar products of the cusp form~$f$ with $[G_{k_1},G_{k_2}\circ N]_m$ and with
$[g,G_{k_2,N}^{(\infty)}]_m$ are proportional, and on the other hand the value of the factor~$A(s)$ in~\eqref{factorA}
at $s=k-m-1$ is equal to~1.  The lemma below says that such an Eisenstein series exists and can be chosen independent of the 
integer~$m$ and of the Hecke form~$f$, but does depend on the eigenvalue~$\e$ of~$f$ with respect to the 
group of Atkin-Lehner involutions (as well as of course on~$N$, $k_1$ and~$k_2$).  

\begin{lemma} For fixed even integers $k_1,k_2>0$ and for every $\e\in\Dg(N)^\vee$, 
define an Eisenstein series $G_{k_1,N}^{\e,k_2}$ in $M_{k_1,N}^{\rm Eis}$ by
\be\label{defGk}  G_{k_1,N}^{\e,k_2} \df \sum_{d|N}\e(d)\,d^{\frac{k_1-k_2}2}\,G_{k_1}\circ d \,. \ee
Then for $m\ge0$, $k=k_1+k_2+2m$ and any $f\in S_{k,N}^\e$, we have
\be\label{GkProp} \bigl\la f,\,\bigl[G_{k_1},G_{k_2}\circ N\bigr]_m\bigr\ra 
  \= G_{k_2}(\infty)\,\cdot\,\bigl\la f,\,[G_{k_1,N}^{\e,k_2},\,E_{k_2,N}^{(\infty)}]_{\vphantom{x^{x^y}}m}\big\ra \ee
and
\be \label{conv} L\bigl(f*G_{k_1,N}^{\e,k_2},\,k-m-1\bigr) \= \frac{L(f,k-m-1)L(f,k_2+m)}{\zeta(k_2)}\,. \ee
\end{lemma}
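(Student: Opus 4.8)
The plan is to treat the two displayed identities \eqref{GkProp} and \eqref{conv} separately, since they rest on different mechanisms: the first is a formal identity of Petersson products forced by Atkin--Lehner symmetry, while the second is an Euler-product computation of a Rankin--Selberg convolution. Together with the Rankin--Selberg formula \eqref{RS} applied to $g=G_{k_1,N}^{\e,k_2}$, and the relation \eqref{rnf} between periods and special $L$-values, the two parts of the lemma will combine to give \eqref{RZN}, which is all that remains for the proof of Theorem~\ref{Thm1}.

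For \eqref{GkProp}, the key is that $f\in S_{k,N}^\e$ satisfies $f|_kW_M=\e(M)f$ and that each Atkin--Lehner operator $W_M$, lying in the normalizer $\G_0^*(N)$, is an isometry of the Petersson product; hence for any $h\in M_{k,N}$ one has $\la f,h\ra=\frac1{2^t}\sum_{M|N}\e(M)\,\la f,\,h|_kW_M\ra$, so that $\la f,h\ra$ sees only the $\e$-eigencomponent $\frac1{2^t}\sum_{M|N}\e(M)\,h|_kW_M$ of~$h$. I would therefore prove the stronger statement that the two brackets in \eqref{GkProp} have the same $\e$-eigencomponent. Using Cohen's compatibility $[F,G]_m|_k\g=[F|_{k_1}\g,G|_{k_2}\g]_m$ together with the transforms $G_{k_1}|_{k_1}W_M=M^{k_1/2}G_{k_1}\circ M$ and $(G_{k_2}\circ N)|_{k_2}W_M=M^{-k_2/2}G_{k_2}\circ(N/M)$ --- which follow from $W_M=\alpha_MV_M$ and $V_NW_M=M\alpha_M^*V_{N/M}$ with $\alpha_M,\alpha_M^*\in SL_2(\Z)$ as recorded in Section~\ref{Eis} --- the $\e$-eigencomponent of $[G_{k_1},G_{k_2}\circ N]_m$ becomes a weighted average of the brackets $[G_{k_1}\circ a,G_{k_2}\circ b]_m$. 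Expanding the right side by the definitions \eqref{defGk} and \eqref{EisInf} and projecting in the same way, the M\"obius weights $\mu(N/e)$ built into $E_{k_2,N}^{(\infty)}$ force all the cross terms to cancel and leave exactly the same average; the scalar $G_{k_2}(\infty)$ accounts for the normalization $E_{k_2}=G_{k_2}/G_{k_2}(\infty)$, and the correction terms of the modified bracket \eqref{MRCB} carry through unchanged when $k_1=2$ or $k_2=2$.

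For \eqref{conv}, I would compute the convolution $L(f*G_{k_1,N}^{\e,k_2},s)=\sum_n a_n(f)\,a_n(G_{k_1,N}^{\e,k_2})\,n^{-s}$ as an Euler product. From \eqref{defGk} one reads off $a_{p^r}(G_{k_1,N}^{\e,k_2})=\sigma_{k_1-1}(p^r)$ for $p\nmid N$ and $a_{p^r}(G_{k_1,N}^{\e,k_2})=\sigma_{k_1-1}(p^r)+\e(p)\,p^{(k_1-k_2)/2}\,\sigma_{k_1-1}(p^{r-1})$ for $p\mid N$. At primes $p\nmid N$ the local factor is the classical Rankin identity, giving $L_p(f,s)\,L_p(f,s-k_1+1)/\zeta_p(2s-k-k_1+2)$; since $k=k_1+k_2+2m$, specializing to $s=k-m-1$ turns the two shifted arguments into $k-m-1$ and $k_2+m$ and the zeta argument into $k_2$, matching \eqref{conv} at those primes. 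The content is therefore entirely at the primes $p\mid N$, where one must insert the local $L$-factor of $f$ --- either the newform factor $(1+\e(p)p^{k/2-1}X)^{-1}$ of \eqref{newLfactor} or, when $f$ is an oldform $\L_{k,N_2}^{\e_2}(f_1)$, the factor coming from \eqref{oldLfactor} --- and verify that the correction factor $A(s)$ of \eqref{factorA} satisfies $A(k-m-1)=1$, so that the local term again equals $L_p(f,k-m-1)\,L_p(f,k_2+m)/\zeta_p(k_2)$.

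I expect this last point to be the main obstacle: the verification, prime by prime for $p\mid N$ and uniformly across the old/new dichotomy, that the power $d^{(k_1-k_2)/2}$ in \eqref{defGk} is exactly the one making the spurious Euler factor $A(s)$ trivial at the single point $s=k-m-1$. The cleanest route is to compute $\sum_r a_{p^r}(f)\,a_{p^r}(G_{k_1,N}^{\e,k_2})\,X^r$ in closed form as a rational function of $X=p^{-s}$ and evaluate it at $X=p^{m+1-k}$; the exponent $(k_1-k_2)/2$ is precisely what makes the numerator cancel against the denominator at that value, leaving the claimed ratio. All remaining manipulations --- the M\"obius cancellation in \eqref{GkProp} and the local bookkeeping in \eqref{conv} --- are routine but require care to keep the various normalizations consistent.
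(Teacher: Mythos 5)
Your proposal is correct and follows essentially the same route as the paper: \eqref{GkProp} is proved there by showing the two brackets have the same $\e$-eigencomponent (the paper organizes this in the Hecke basis $G_{k,N}^\e$ via the orthogonality relations $\sum_\e\e(d)\e(d')=2^t\delta_{d,d'}$ rather than by Atkin--Lehner averaging in the divisor basis $G_k\circ d$, but this is the same finite Fourier analysis on $\Dg(N)$), and \eqref{conv} is proved by exactly the Euler-product computation you outline, split into the cases $p\nmid N$, $p\mid N_1$ ($p$-new) and $p\mid N_2$ ($p$-old), checking in the latter two that the spurious factor equals $1$ at $X=p^{m+1-k}$. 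The points you flag as the main obstacle are precisely the computations the paper's Cases 2 and 3 carry out, and they work out as you predict.
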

\begin{proof} We write $G^\e$ for $G_{k_1,N}^{\e,k_2}$ for simplicity. Because of the orthogonality of the different 
eigenspaces of the group of Atkin-Lehner involutions, to prove \eqref{GkProp} it suffices to show that
\be\label{orthog} \bigl(\bigl[G_{k_1},\,G_{k_2}\circ N\bigr]_m\bigr)^{(\e)} \=
  G_{k_2}(\infty)\,\cdot\,\bigl(\bigl[G^\e,\,E_{k_2,N}^{(\infty)}\bigl]_{\vphantom{x^{x^y}}m}\bigr)^{(\e)}  \ee
for any $m\ge0$, where $k=k_1+k_2+2m$ as before and where we denote by $F^{(\e)}$ the $\e$-eigencomponent 
of a modular form~$F$ on~$\G_0(N)$.
The proof of this is an algebraic juggling game using the different
bases of $M_{*,N}^{\rm Eis}$.  First of all, applying the orthogonality relation $\sum_\e\e(d)\e(d')=2^t\delta_{d,d'}$
to the definition $G_{k,N}^\e=\sum_{d|N}\e(d)\,d^{k/2}G_k\circ d$, we get the expression
\be\label{GetoGd}  G_k\circ d \= \frac1{2^t\,d^{k/2}}\sum_{\e\in\Dg(N)^\vee} \e(d)\,G_{k,N}^\e \qquad(d\in\Dg(N))\ee
for the eigenfunction decomposition of each $G_k\circ d$.  Applying this with $(k,d)$ replaced by $(k_1,1)$
and by~$(k_2,N)$, and observing that the Rankin-Cohen bracket of an $\e_1$-eigenfunction and an $\e_2$-eigenfunction
is an $\e_1\e_2$-eigenfunction (because of the basic modular equivariance property of the bracket), we obtain
\bes  \bigl(\bigl[G_{k_1},\,G_{k_2}\circ N\bigr]_m\bigr)^{(\e)} \=\frac{4^{-t}}{N^{k_2/2}}
 \sum_{\substack{\e_1,\,\e_2\in\Dg(N)^\vee\\ \e_1\e_2=\e}}\e_2(N)\,\bigl[G_{k_1,N}^{\e_1},\,G_{k_2,N}^{\e_2}\bigr]_m\;.\ees
On the other hand, from equations~\eqref{defGk} and~\eqref{GetoGd} we obtain
\begin{align*} G^\e  &\= \frac1{2^t}\,\sum_{d|N}\e(d)\,d^{-k_2/2} \sum_{\e_1\in\Dg(N)^\vee}\e_1(d)\,G_{k_1,N}^{\e_1} \\
  &\= \frac1{2^t}\,\sum_{\e_1\in\Dg(N)^\vee}\; \prod_{p|N}\Bigl(1+\e\e_1(p)p^{-k_2/2}\Bigr)\;G_{k_1,N}^{\e_1}   \end{align*}
and from equations~\eqref{EisInf} and~\eqref{GetoGd} we obtain
\begin{align*} G_{k_2}(\infty)\; E_{k_2,N}^{(\infty)}  &\= \frac1{2^t\prod_{p|N}(p^{k_2}-1)}\;\sum_{d|N}\mu\Bigl(\frac Nd\Bigr)\,d^{k_2/2}\,
   \Biggl(\sum_{\e_2\in\Dg(N)^\vee}\e_2(d)\,G_{k_2,N}^{\e_2}\Biggr) \\
  &\= \frac1{2^t} \sum_{\e_2\in\Dg(N)^\vee}\;\prod_{p|N}\Bigl(\frac1{\e_2(p)p^{k_2/2}+1}\Bigr)\;G_{k_2,N}^{\e_2}\,,  \end{align*}
and combining these two equations gives
\bes G_{k_2}(\infty)\, \bigl(\bigl[G^\e,\,E_{k_2,N}^{\infty}\bigr]_m\bigr)^{(\e)} 
 \,=\, \frac{4^{-t}}{N^{k_2/2}} \sum_{\substack{\e_1,\,\e_2\in\Dg(N)^\vee\\ \e_1\e_2=\e}}
  \e_2(N)\,\bigl[G_{k_1,N}^{\e_1},\,G_{k_2,N}^{\e_2}\bigr]_m\;.  \ees
Comparing this with the previous result we obtain equation~\eqref{orthog}. 

To establish~\eqref{conv}, we will show first that~\eqref{factorA} holds for $g=G_{k_1,N}^{\e,k_2}$ for some function~$A(s)$
that is a product over all prime factors of~$N$ of functions $A_p(p^{-s})$ each of which has the value~1 at $s=k-m-1$.  This works because 
the $L$-series of both $f$ and $G^\e$ have Euler products, and we can therefore work one prime at a time.
Explicitly, the $L$-series of $G^\e$ is given by
\begin{align*} L(G^\e,s) & \= \sum_{d|N}\e(d)\,d^{\frac{k_1-k_2}2-s}\,\zeta(s)\,\zeta(s-k_1+1) \\
 & \= \prod_{p|N} \bigl(1\+\e(p)p^{\frac{k_1-k_2}2-s}\bigr)\,\cdot
      \prod_{\text{all $p$}}\frac1{(1-p^{-s})\,(1-p^{k_1-1-s})} \,. \end{align*}
We write any $L$-series $L(s)$ with an Euler product as $\prod_pL_p(p^{-s})$ where $L_p(X)$ is a rational function of~$X$.  
By the discussion in Section~\ref{PerPolys}, we can write any Hecke form $f\in\BB_{k,N}^\e$ as $f=\L_{k,N_2}^{\e_2}(f_1)$
with $f_1\in\BB_{k,N_1}^{\rm new,\e_1}$ for some decomposition $N=N_1N_2$ and corresponding decomposition $\e=\e_1\e_2$, 
and then $L(f,X)_p$ is given by equation~\eqref{oldLfactor} in terms of~$L(f_1,X)_p$, which in turn is given by 
equation~\eqref{newLfactor} with $N$, $\e$ and $f$ replaced by $N_1$, $\e_1$ and~$f_1$.  The local calculation 
splits into three cases, according as $p\nmid N$, $p|N_1$ or $p|N_2$. 

\bigskip \noindent {\it Case 1}\,: $p\nmid N$.  In this case we have 
$$ L(G^\e,X)_p  \= \frac1{(1-X)(1-p^{k_1-1}X)} \= \sum_{i=0}^\infty \frac{p^{(k_1-1)(i+1)}-1}{p^{k_1-1}-1}\,X^i $$
and
$$ L(f,X)_p  \= \frac1{1-a_p(f)X+p^{k-1}X^2} \= \sum_{i=0}^\infty \frac{\alpha^{i+1}-\beta^{i+1}}{\alpha-\beta}\,X^i $$
where $\alpha+\beta=a_p(f)$, $\alpha\beta=p^{k-1}$.  It follows that
\begin{align*}  & L(f*G^\e,X)_p \= \sum_{i=0}^\infty \frac{p^{(k_1-1)(i+1)}-1}{p^{k_1-1}-1}\,
    \frac{\alpha^{i+1}-\beta^{i+1}}{\alpha-\beta}\,X^i \\  
  &\quad \= \frac1{(\alpha-\beta)(p^{k_1-1}-1)}\,\biggl[\frac{\alpha p^{k_1-1}}{1-\alpha p^{k_1-1}X} - 
    \frac{\beta p^{k_1-1}}{1-\beta p^{k_1-1}X} -\frac{\alpha}{1-\alpha X} +\frac{\beta}{1-\beta X}\biggr] \\
  &\quad \= \frac{1 \,-\, p^{k+k_1-2}\,X^2}{(1-\alpha p^{k_1-1}X)(1-\beta p^{k_1-1}X)(1-\alpha X)(1-\beta X)} \\
  & \quad \= \frac{L(f,X)_p\,L(f,p^{k_1-1}X)_p}{\zeta(p^{k+k_1-2}X^2)_p}\,,\end{align*}
establishing the prime-to-$N$ part of equation~\eqref{orthog}.  This is the standard method of calculation, but in fact
there is a simpler way that does not require factoring the denominator of $L(f,X)_p$ into linear factors: one 
decomposes $L(G^\e,X)_p$ by partial fractions in the form $\frac{c_1}{1-X}+\frac{c_2}{1-p^{k_1-1}X}$
and then gets $L(f*G^\e,X)_p$ as $c_1L(f,X)_p+c_2L(f,p^{k_1-1}X)_p$.

\bigskip \noindent {\it Case 2}\,: $p|N_1$.  This is the ``$p$-new" case, since $p\nmid N_2$ and therefore $f$ is a newform as 
far as the prime~$p$ is concerned. Equations \eqref{oldLfactor} and~\eqref{newLfactor} give
 $$ L(f,X)_p \= L(f_1,X)_p \= \frac1{1\+\e_1(p)\,p^{k/2-1}X}\= \sum_{i=0}^\infty (-p^{k/2-1}\e_1(p))^i\,\,X^i\, $$
while the formula for $L(G^\e,s)$ given above (here with $\e(p)=\e_1(p)$) gives
\be \label{GeXp}  L(G^\e,X)_p \= \frac{1 + \e_1(p)\,p^{(k_1-k_2)/2}\,X}{(1-X)(1-p^{k_1-1}X)}\,.  \ee
The calculation here is easier than in the generic case $p\nmid N$, because the form of $L(f,X)_p$ as a geometric
series means that its convolution with any power series in~$X$ is obtained simply by replacing $X$ by $-\e_1(p)p^{k/2-1}X$ 
in that power series.  We therefore get after a short calculation
\be\label{LandA} L(f*G^\e,X)_p \= \frac{1-p^{k_1+m-1}X}{1-p^{k+k_1-2}X^2} \, 
  \frac{L(f,X)_p\,L(f,p^{k_1-1}X)_p}{\zeta(p^{k+k_1-2}X^2)_p} \ee
and since the first factor takes the value 1 at $X=p^{-k+m+1}$, we have established the $p$-part of~\eqref{conv} also in this case. 

\bigskip \noindent {\it Case 3}\,: $p|N_2$.  This is the ``$p$-old" case, since $f$ comes from a form whose level does
not contain~$p$.  Here equations \eqref{oldLfactor} and~\eqref{newLfactor} give
 $$ L(f,X)_p \= \frac{1\+\e_2(p)\,p^{k/2}X}{1-a_p(f_1)X+p^{k-1}X^2} \,, $$
while $L(G^\e,X)_p$ is given by~\eqref{GeXp}, but with $\e_1(p)$ replaced by~$\e_2(p)$. As in Case 1 we can write each of
these Euler factors as a linear combination of two geometric series, obtaining for their convolution a sum of
four geometric series that can be evaluated by an elementary, though quite tedious, computation, or alternatively
write $L(G^\e,X)_p$ in the form $\frac{c_1}{1-X}+\frac{c_2}{1-p^{k_1-1}X}$ and get $L(f*G^\e,X)_p$ as 
$c_1L(f,X)_p+c_2L(f,p^{k_1-1}X)_p$.  The result of the computation can be written in the form
\begin{align*} & L(f*G^\e,X)_p  \= (1-p^{k+k_1-2}X^2)\,L(f,X)_p\,L(f,p^{k_1-1}X)_p \\ 
 & \qquad \+ \e_2(p)p^{\frac{k_1-k_2}2}X\,(1-p^{k-m-1}X)\,Q(X)
  \,L(f_1,X)_p\,L(f_1,p^{k_1-1}X)_p \;, \end{align*}
where $Q(X)=a_p(f_1)+\e_2(p)p^{\frac k2}-(p^{k_1-1}+1)p^{k-1}X - \e_2(p)p^{k_1+\frac{3k}2-2}X^2$ is an irrelevant
quadratic polynomial.  Since the second term vanishes at $X=p^{-k+m+1}$, this establishes the $p$-part of~\eqref{conv}
in this case as well and completes the proof of the lemma.
\end{proof}

Now combining equations~\eqref{defgNk1k2m}, \eqref{GkProp}, \eqref{RS} with $g=G^\e$, \eqref{conv}, \eqref{rnf} and
the formula $G_{k_2}(\infty)=(k_2-1)!\zeta(k_2)/(2\pi i)^{k_2}$ gives equation~\eqref{RZN},
completing the proof of Proposition~\ref{MRCBN} and hence also of Theorems~\ref{Thm1} and~\ref{Thm3}.

\section{Numerical Examples} \label{Examples}

In this section we give a number of examples illustrating both Theorem~\ref{Thm1} and Theorem~\ref{Thm2}, 
postponing the proof of the latter to Section~\ref{SmallLevel}.  We look at the five levels $N=2$, 3, 5, 6 and~7, 
giving in the first four cases (without proof) the structure of the ring $M_{*,N}=M_*(\G_0(N))$ and using 
Theorem~\ref{Thm1} to compute  the values of $R_f(X,Y)$ for a number of new forms (and for $N=2$ also for 
a couple of old forms).  We also give an example for $N=5$ showing how one can obtain the Hecke forms themselves, 
as well as their period polynomials, from the generating function, as asserted in Theorem~\ref{Thm2}, and
also an example for~$N=7$ showing that in that case the corresponding assertion is no longer true.
To keep the numerology simple, we will mostly concentrate on examples where $\dim S_{k,N}^{\rm new,\e}=1$, 
so that the polynomial $R_f(X,Y)$ for its generator~$f$ has rational coefficients, but to illustrate
the use of Theorem~2 in a non-trivial example we also look at one case ($N=5$, $k=8$, $\e=+1$)
where $S_{k,N}^{\rm new,\e}$ has dimension~2, and to show the failure of Theorem~\ref{Thm2} for~$N=7$
we also look at an example where this dimension equals~2. In the cases when $N=p$ is prime, the character
$\e\in\Dg(p)^\vee$ is determined by $\e(p)\in\{\pm1\}$, so we will write $S_{k,p}^\e$ simply as $S_{k,p}^\pm$.

\medskip\noindent{\bf N\,=\,2\,.}  The ring $M_{*,2}$ of modular forms on $\G_0(2)$ is the free algebra on 
$G_{2,2}^-(\t)$ and $G_{4,2}^-(\t)$ (or $G_4(\t)$, or $\eta(2\t)^{16}/\eta(\t)^8$), and the ideal
$S_{*,2}$ of cusp forms is the free module generated by $\D^{+}_8(\t)=\eta(\t)^8\eta(2\t)^8$.
From this information we can find the space $M_{k,2}$, and its canonical Hecke form basis, for any given weight~$k$.
In particular, in five cases the space $S^{\text{new},\pm}_{k,2}$ is one-dimensional, so that the corresponding
newform (which we denote simply by $\D_k^\pm$ rather than the more correct $\D_{k,2}^\pm$) has coefficients in~$\Z\,$: 
\begin{align*}
 \D^+_8(\t)    &\,=\, q - 8q^2 + 12q^3 + 64q^4 - 210q^5 - 96q^6 + \cdots\,, \\
 \D^-_{10}(\t) &\,=\, q + 16q^2 - 156q^3 + 256q^4 + 870q^5 - 2496q^6+\cdots\,, \\
 \D_{14}^+(\t) &\,=\, q - 64q^2 - 1836q^3 + 4096q^4 + 3990q^5 + 117504q^6 +\cdots\,, \\
 \D_{14}^-(\t) &\,=\, q + 64q^2 + 1236q^3 + 4096q^4 - 57450q^5 + 79104q^6 +\cdots\,, \\
 \D_{16}^+(\t) &\,=\, q - 128q^2 + 6252q^3 + 16384q^4 + 90510q^5 - 800256q^6 + \cdots\,.
\end{align*}
We also have two oldforms coming from $\D(\t)=q\prod(1-q^n)^{24}\in S_{12}$,
\begin{align*}
 \D_{12}^+(\t) &\,=\,\D(\t)+64\D(2\t)\,=\,  q + 40q^2 + 252q^3 - 3008q^4 + 4830q^5 +\cdots\,, \\
 \D_{12}^-(\t) &\,=\, \D(\t)-64\D(2\t)\,=\, q - 88q^2 + 252q^3 + 64q^4 + 4830q^5 + \cdots\,,
\end{align*}
as well as similar oldforms (which we will omit) with rational Fourier coefficients in weights 
16, 18, 20, 22 and 26. Computing the coefficients of the generating function~$\CC_2$ of Theorem~\ref{Thm1}, we find the 
following values of $\Rg_f(X,Y)\,=\,r_f^{\rm ev}(X)r_f^{\rm odd}(Y)/(2i)^{k-3}\la f,f\ra$ for these forms:
\smallskip
\begin{center} \renewcommand{\arraystretch}{1.3}
\begin{tabular}{|c|c|} \hline $f  $ &  $ \Rg_{f }(X,Y)$  \\ \hline 
$\D^+_8$ & $\bigl[\frac{1}{17}(8X^6-1) - (2X^4-X^2)\bigr] \cdot \bigl[(4Y^5+Y)-5Y^3\bigr]$  \\ \hline 
$\D^-_{10}$ & $ -\frac23\,\bigl[\frac{3}{31}(16X^8+1)-2(4X^6+X^2)+7X^4\bigr] \cdot \bigl[(8Y^7-Y)-7(2Y^5-Y^3)\bigr]$ \\ \hline 
$\D_{12}^+$ & $\frac18\,\bigl[\frac{33}{691}(32X^{10}-1)-(8X^8-X^2)+4(2X^6-X^4)\bigr]$\\
 & $\cdot\,\bigl[17(16Y^9+Y)-125(4Y^7+Y^3)+336Y^5\bigr]$ \\ \hline 
$\D_{12}^-$ & $\frac{1}8\,\bigl[\frac{279}{691}(32X^{10}+1)-7(8X^8+X^2)+12(2X^6+X^4)\bigr]$ \\
 & $ \cdot\,\bigl[(16Y^9-Y)-5(4Y^7-Y^3)\bigr] $ \\  \hline 
$\D_{14}^+$ & $-\frac18\,\bigl[\frac{9}{43}(64X^{12}-1)-4(16X^{10}-X^2)+11(4X^8-X^4)\bigr]$\\
 & $\cdot\,\bigl[9(32Y^{11}+Y)-55(8Y^9+Y^3)+66(2Y^7+Y^5)\bigr]$ \\ \hline 
$\D_{14}^-$ & $-\frac{11}8\,\bigl[\frac{25}{127}(64X^{12}+1)-4(16X^{10}+X^2)+15(4X^8+X^4)-32X^6\bigr]$ \\
 & $ \cdot\,\bigl[(32Y^{11}-Y)-7(8Y^9-Y^3)+18(2Y^7-Y^5)\bigr] $ \\  \hline 
$\D_{16}^{+}$ & $\frac{77}{75}\bigl[\frac{105}{257}(128X^{14}-1) - 8(32X^{12}-X^2) + 26(8X^{10}-X^4) - 39(2X^8-X^6)\bigr]$ \\
 & $\cdot\,\bigl[2(64Y^{13}+Y) - 13(16Y^{11}+Y^3) + 26(4Y^9+Y^5) -39Y^7\bigr]$ \\  \hline
\end{tabular} \renewcommand{\arraystretch}{1.0} \end{center}  
\medskip\noindent 
In each case we independently computed the values of $\la f,f\ra$ and of the coefficients of $r_f(X)$ as real numbers
and found the same results (but now only numerically, rather than exactly) in each case, the example of~$\D_8^+$ 
already having been given in Section~\ref{PerPolys}.  As a further check, one can see that the values of $\Rg_f(X,Y)$
for the oldforms $\D_{12}^\pm$ are given by
\begin{align*} \Rg_{\D_{12}^+}(X,Y) &\= \tfrac1{1152}\,\bigl(P(2X)+32P(X)\bigr)\,\bigl(Q(2Y)+32Q(Y)\bigr)\,, \\
 \Rg_{\D_{12}^-}(X,Y) &\= \tfrac1{1920}\,\bigl(P(2X)-32P(X)\bigr)\,\bigl(Q(2Y)-32Q(Y)\bigr)\,, \end{align*}
where 
$$P(X)\,=\,\tfrac{36}{691}\,\bigl(X^{10}-1)-X^2(X^2-1)^3, \;\quad Q(Y)\,=\,Y(Y^2-1)^2(Y^2-4)(4Y^2-1)$$ 
are (up to constants) the even and odd parts of the period polynomial of~$\D$, in accordance
with~\eqref{relPer} and~\eqref{PSPRatio}. (Up to a factor~$2^8$ the numbers 1152 and 1920 are 
equal to the numbers $2(2\pm 2^{-5}a_2(\Delta)+1)$ occurring in~\eqref{PSPRatio}.)

\bigskip\noindent{\bf N\,=\,3\,.} 
The ring of modular forms on $\G_0(3)$ is the ring of even polynomials in the two modular forms 
\begin{align*} \T(\t) &\= \sum_{m,\,n\,\in\,\Z}\,q^{m^2+mn+n^2}  \= 1 + 6q + 6q^3 + 6q^4 + 12q^7 + \cdots\;, \\
 H(\t) &\= 27\frac{\eta(3\t)^9}{\eta(\t)^3}  \,-\, \frac{\eta(\t)^9}{\eta(3\t)^3} 
 \= -1 + 36q + 54q^2 + 252q^3 +  \cdots \end{align*}
(both of which can also be written as Eisenstein series) of weight~1 and~3 and character~$\bigl(\frac\cdot3\bigr)$, and 
the ideal of cusp forms is again principal, generated this time by $\D_6^+=\frac1{108}(\T^6-H^2)=\eta(\t)^6\eta(3\t)^6$.  
The generators of the one-dimensional spaces $S_{k,N}^{\rm new,\pm}$ in this case are given by
\begin{align*}
\D_6^-(\t) & \,=\,  q - 6q^{2} + 9q^{3} + 4q^{4} + 6q^{5} - 54q^{6} - 40q^{7}  + \cdots\,,   \\
\D_8^+(\t) & \,=\,   q + 6q^{2} - 27q^{3} - 92q^{4} + 390q^{5} - \cdots\,,\\
\D_{10}^+(\t) & \,=\, q - 36q^{2} - 81q^{3} + 784q^{4} - 1314q^{5} + \cdots\,,  \\
\D_{10}^-(\t)  & \,=\,  q + 18q^{2} + 81q^{3} - 188q^{4} - 1530q^{5} +\cdots\,.  \end{align*}
and from Theorem~\ref{Thm1} we find that the corresponding two-variable period polynomials $\Rg_f(X,Y)$ are
\begin{center}  \renewcommand{\arraystretch}{1.3} \begin{tabular}{|c|c|}   \hline
$f$ &  $\Rg_f(X,Y)$  \\ \hline
 $\D^-_6$ & $-\frac23\,\bigl[\frac{1}{13}(9X^4+1)-X^2\bigr]\cdot\bigl[3Y^3-Y\bigr]$  \\  \hline
 $\D^+_{8}$ &  $\frac13\,\bigl[\frac2{41}(27X^6-1)-(3X^4-X^2)\bigr]\cdot\bigl[3(9Y^5+Y) - 20 Y^3\bigr]$ \\  \hline
 $\D_{10}^+$ & $ -\frac4{27}\bigl[\frac4{61}(81X^8-1)-(9X^6-X^2)\bigr]\cdot\bigl[2(27Y^7+Y)-7(3Y^5+Y^3)\bigr]$ \\ \hline
 $\D_{10}^-$ &  $- \frac{14}{27}\bigl[\frac1{11}(81X^8+1)-2(9X^6+X^2)+9X^4\bigr]\cdot\bigl[(27Y^7-Y)-8(3Y^5-Y^3)\bigr]$ \\ \hline
\end{tabular}  \renewcommand{\arraystretch}{1.0}  \end{center}  \medskip \noindent

\bigskip\noindent{\bf N\,=\,5\,.}  Here the ring $M_{*,N}$ is generated by the three Eisenstein
series $G_{2,5}^-$, $G_{4,5}^+$ and $G_{4,5}^-$, with one quadratic relation in weight~8 that we do not write down,
and the ideal of cusp forms is the principal ideal generated by the newform $\D_4^+=\eta(\t)^4\eta(5\t)^4$. 
The newforms with rational Fourier coefficients in this case are
\begin{align*}
\D_4^+(\t) & \,=\,  q - 4q^2 + 2q^3 + 8q^4  - 5q^5 - 8q^6 +6q^7  + \cdots\,,   \\
\D_6^-(\t) & \,=\,  q + 2q^2 - 4q^3 - 28q^4 + 25q^5 - 8q^6 + 192q^7 + \cdots\,,\\
\D_8^-(\t) & \,=\,  q - 14q^2 - 48q^3 + 68q^4 + 125q^5 + 672q^6 - 1644q^7 + \cdots\,.  \end{align*}
and using Theorem~\ref{Thm1} just as before we find the corresponding period polynomials
\smallskip
\begin{center}  \renewcommand{\arraystretch}{1.3} \begin{tabular}{|c|c|}   \hline
$f$ &  $\Rg_f(X,Y)$  \\ \hline
 $\D^+_4$ & $\frac4{65}\big[5X^2-1\bigr]\cdot\bigl[Y\bigr]$  \\  \hline
 $\D^-_6$ &  $-\frac65\bigl[\frac5{93}(25X^4+1)-X^2\bigr]\cdot\bigl[5Y^3-Y\bigr]$ \\  \hline
 $\D_8^-$ & $ \frac3{25}\bigl[\frac6{65}(125X^6+1)-(5X^4+X^2)\bigr]\cdot\bigl[25Y^5-Y\bigr]$ \\ \hline
\end{tabular}  \renewcommand{\arraystretch}{1.0}  \end{center}  \medskip \noindent
Of course, we can also look at eigenspaces $M_{k,N}^{\rm new,\,\e}$ that are not one-dimensional, so that the 
corresponding Hecke eigenforms no longer have rational Fourier coefficients.  The first such forms here are the form
$$ f_8 \= q \+ (10+2\sqrt{19})\,q^2 \+(10-16\sqrt{19})\,q^3 \+(48+40\sqrt{19})\,q^4 \+ \cdots $$
in $S_{8,5}^+$ and its Galois conjugate~$f_8^\sigma$, where $1\ne\sigma\in\text{Gal}(\Q(\sqrt{19})/\Q)$. If we expand 
the coefficient $C_{8,5}(X,Y,\t)$ of $T^6/6!$ in $\CC_5(X,Y,\t,T)$ and write its cuspidal part in terms of the forms
$f_8$ and $f_8^\sigma$, then we find that the two-variable period polynomial $\Rg_{f_8}(X,Y)$ is given by
\begin{align*} \Rg_{f_8}(X,Y)&\=\frac{11-7/\sqrt{19}}{375}\,\Bigl[\frac{4}{97+\sqrt{19}}(125X^6-1)-(5X^4-X^2)\bigr] \\
&\qquad \cdot\,\bigl[15(25Y^5+Y)-(137+\sqrt{19})Y^3\bigr]\,.\end{align*}

The value of $\Rg_{f_8}(X,Y)$ just given was derived assuming that the Hecke eigenforms $f_8$,~$f_8^\sigma$ were already
known. We now illustrate Theorem~\ref{Thm2} by showing how we can obtain these Hecke eigenforms as well as their
period polynomials directly from $\CC_5(X,Y,\t,T)$, {\it without} knowing them in advance.  (We chose to use this
example rather than one for $N=2$ or $N=3$ to illustrate the theorem both because it is the largest level occurring
in the theorem and because the first newforms with non-rational coefficients have smaller weight and therefore 
also smaller Fourier coefficients in this case.)  To do this, we first use Theorem~1 to compute $C_{8,5}(X,Y,\t)$
as the coefficient of $T^6$ in $\BC_5(X,Y,\t,T)$, then subtract from this its Eisenstein part as computed in Section~\ref{Eis}
to get $C_{8,5}^{\rm cusp}(X,Y,\t)$, and then symmetrize with respect to $X\leftrightarrow-X$ to get the even-odd part
$\Cg_{8,5}^{\rm cusp}(X,Y,\t)$, which is an element of $\V_{6,5}^{\rm ev}(X)\otimes\V_{6,5}^{\rm od}(Y)\otimes\Q[[q]]$.
We can then write this element with respect to the bases 
$$ (e_1^+(X), e_2^+(X), e_1^-(X), e_2^-(X)) = (125X^6-1, 5X^4-X^2,125X^6+1, 5X^4+X^2)$$
and
$$ (f_1^+(Y),f_2^+(Y),f_1^-(Y)) = (25Y^5+Y, Y^3, 25Y^5-Y)$$
of $\V_{6,5}^{\rm ev}(X)$ and $\V_{6,5}^{\rm od}(Y)$ as
$$\Cg_{8,5}^{\rm cusp}(X,Y,\t) \= \sum_{i=1}^2\sum_{j=1}^2 A_{i,j}(\t)\,e_i^+(X)\,f_j^+(Y) \+ \sum_{i=1}^2 B_{i,1}(\t)\,e_i^-(X)\,f_1^-(Y)$$
with explicit matrices $A(\t)$ and $B(\t)$ with coefficients in $\Q[[q]]$.  Writing out these matrices, we find that the 
two entries of the $2\times1$ matrix $B(\t)$ are proportional (as has to be the case), so that we obtain a factorization 
 $$ 6!\,B(\t) \= \frac3{25}\,\begin{pmatrix} \frac6{65}\\-1\end{pmatrix}\,(q - 14q^2 - 48q^3 + 68q^4 + 125q^5 + \cdots)$$
and thus recover both the Hecke form $\D_{8,5}^-$ and the value of $\Rg_{\D_{8,5}^-}(X,Y)$ as given above, without having
had to assume the value of~$\D_{8,5}^-$ to be known in advance.  Similarly, the four entries of the $2\times2$ matrix 
$A(\t)$ span a 2-dimensional space of power series. If we choose a normalized basis 
\begin{align*}  F_1(\t) &\= q \+ 0\,q^2 \+ 90\,q^3 \,-\, 152\,q^4 \,-\,  125\,q^5 \+ 192\,q^6 \+ \cdots\,, \\
 F_2(\t) & \= 0\,q \+ q^2 \,-\,  8\,q^3 \+ 20\,q^4 \+ 0\,q^5 \,-\,  70\,q^6 \+ \cdots \end{align*}
for this space, then we can write $A(\t)$ as
$$ 6!\,A(\t) \= \begin{pmatrix} \frac{1432}{39125} & -\frac{104}{313_{\vphantom y}}  \\
    -\frac{22^{\vphantom h}}{25} & 8 \end{pmatrix}\,F_1(\t) 
   \+ \begin{pmatrix} \frac{11952}{39125} & -\frac{896}{313_{\vphantom y}} \\
    -\frac{192^{\vphantom h}}{25} & 72 \end{pmatrix}\,F_2(\t)\,, $$
Writing $A(\t)$ instead as a linear combination of the two as yet unknown Hecke eigenforms 
$f_i(\t)\,=\, q+\lambda_iq^2+\cdots\,=\,F_1+\lambda_iF_2$, we find
$$ 6!\,A(\t)\,=\, \frac{a(\lambda_2)f_1+a(\lambda_1)f_2}{\lambda_1-\lambda_2}\,,\;\quad
 a(\lambda) \,=\, \begin{pmatrix} \frac{8(179\lambda-1494)}{39125} & -\frac{16(13\lambda-112)}{313_{\vphantom y}} \\
    -\frac{2(11^{\vphantom h}\lambda-96)}{25} & 16(\lambda-9)\end{pmatrix}\,.$$
The fact that the coefficients of $f_1(\t)$ and $f_2(\t)$ in $\Cg_{8,5}(X,Y,\t)$ have to factor as the product
of a polynomial in~$X$ and a polynomial in~$Y$ then tells us that $\lambda_1$ and $\lambda_2$ have to be the roots of 
the quadratic equation $\det a(\lambda) = \frac{16}{39125}\,(\lambda^2 - 20\lambda + 24)=0$, so
$\lambda_1,\,\lambda_2\=10\pm2\sqrt{19}$. We have thus obtained the Hecke forms~$f_8$ and $f_8^\sigma$ as well
as their period polynomials from Theorem~\ref{Thm1} without having to assume anything in advance.

This calculation is an example of the application of the lemma on page~461 of~\cite{Z1991}, which implies that
all the Hecke forms~$f(\t)$ are uniquely determined by the expression $\sum_f\Rg_f(X,Y)f(\t)$ if we know that the
maps $f\mapsto r_f^{\rm ev}$ and $f\mapsto r_f^{\rm ev}$ are injective.  We will show in~\S\ref{SmallLevel} that this
injectivity holds in all weights if $p=2$, 3 or 5, thus proving Theorem~\ref{Thm2}.

\bigskip\noindent{\bf N\,=\,6\,.}  Here the ring $M_{*,N}$ consists of the even polynomials in the two modular forms
$\T(\t)$ and $\T(2\t)$ of weight~1, with the same $\T(\t)$ used for~\hbox{$N=3$}.  We illustrate Theorem~\ref{Thm1} by looking
at the case~$k=6$.  The space $M_{6,6}$ is 7-dimensional, with a Hecke basis consisting of the four Eisenstein series
$G_{6,6}^{\pm,\pm}(\t)\in M_{6,6}^{\pm,\pm}$ (where we indicate a character $\e\in\Dg(6)^\vee$ by giving the pair
$(\e(2),\e(3))$), the two old forms $\L_{6,2}^\pm(\D_{6,3}^-)\in S_{6,6}^{\pm,-}$ coming from level~3, and the unique newform
$$ \D_6^{-,+}(\t) \=  q + 4q^2 - 9q^3 + 16q^4 - 66q^5 - 36q^6 + 176q^7 + \cdots\,.$$
If we decompose the coefficient $\Cg_{6,6}(X,Y,\t)$ of $T^4/4!$ in~$\Cg_6(X,Y,\t,T)$ as given by Theorem~\ref{Thm1} as
a linear combination of these seven functions with coefficients in $\hV^{\rm ev}(X)\otimes\hV^{\rm od}(Y)$, then for the
six oldforms we indeed get the values related to those for $G_4$ and $\D_{6,3}^-$ in the way discussed in~\S\ref{PerPolys},
while for the newform we find
$$ \Rg_{\D_6^{-,+}}(X,Y) \= -\frac2{63}\,\bigl[(36X^4+1) - 21X^2\bigr]\,\bigl[6Y^3-Y\bigr]\,.$$

\bigskip\noindent{\bf N\,=\,7\,.}  In this case the spaces $S_{4,7}^-$ and $S_{6,7}^+$ are 1-dimensional, generated by the forms
\begin{align*} \D_4^-(\t) &\= q - q^2-2q^3-7q^4+16q^5+2q^6-7q^7+ \cdots\,, \\  
   \D_6^+(\t) &\= q-10q^2-14q^3+68q^4-56q^5+140q^6-49q^7+ \cdots
\end{align*}
with rational coefficients, and by calculations like the ones before we find that the corresponding even-odd period polynomials 
$\Rg_f(X,Y)$ are equal to $\tfrac4{35}(7X^2-1)Y$ and $-\frac8{43\cdot 7^2}(49X^4-1)(7Y^3+Y)$, respectively.
The space $S_{6,7}^-$ is 2-dimensional, generated by the form 
$$ f_6 \= q + \tfrac{9+\sqrt{57}}2\,q^2 - 3(1+\sqrt{57})q^3 + \tfrac{5+9\sqrt{57}}2\,q^4\,+\cdots \;\in\;\Z\bigl[\tfrac{1+\sqrt{57}}2\bigr][[q]]$$
and its Galois conjugate~$f_6^\sigma$, where $1\ne\sigma\in\text{Gal}(\Q(\sqrt{57})/\Q)$, and for this form Theorem~\ref{Thm1} gives
 $$ \Rg_{f_6}(X,Y)\= \tfrac{5-3\sqrt{3/19}}{7}\Bigl[\tfrac{-17+\sqrt{3/19}}{392}(49X^4+1)+X^2\Bigr]\cdot\Bigl[7Y^3-Y\Bigr]\,.$$
Thus in this case the eigenforms $f_6$ and $f_6^\sigma$ have the same odd period polynomial up to a constant (as was indeed clear
a priori, since this polynomial must be a multiple of $7Y^3-Y$) and therefore one {\it cannot} find these two forms separately 
just by decomposing $\Cg_7^{\rm cusp}(X,Y,\t,T)$ into a sum of two products of the forms $P(X)Q(Y)f(\t)$, because this decomposition is now 
not unique. This shows that Theorem~\ref{Thm2} fails for~$p=7$, even when restricted to newforms (which was not obvious {\it a priori}, because
the dimension of $S_{k,p}^{\rm new}$ is $(p-1)k/{12}+\text O(1)$ for $p$ prime, which for $p=7$ is asymptotically the same as the dimension 
of $\V_{k-2}^{\rm ev}$ or $\V_{k-2}^{\rm od}$, so that the theorem, which had to fail for  $S_{k,7}$, might have been true for new forms; for $p>7$
the dimension even of the space of new forms is too big, so that Theorem~\ref{Thm2} cannot hold).

\bigskip

\section{Proof of Theorem 2 and Haberland-type formulas} \label{SmallLevel}

In this final section of the paper we treat the case of small prime
levels, proving Theorem~\ref{Thm2} for the primes $p=2$, 3 and 5 and
also giving in each case an expression for the scalar products $\la f,f\ra$
in terms of the periods of~$f$ for any Hecke form~$f\in S_{k,p}$. 
For convenience we abbreviate~\hbox{$w=k-2$}.

As in Section~\ref{Examples}, when the level $N$ is a prime~$p$, we identify
the elements of $\Dg(p)^\vee$ with their values on~$p$, writing $M_{k,p}^\e$ as
$M_{k,p}^\pm$ if $\e(p)=\pm1$.  We want to prove the injectivity of the period
map $r^{\rm ev/od}:f\mapsto r_f^{\rm ev/od}$ from $S_{k,p}$ to $\V_{w,p}^{\rm ev/od}$ for small values of~$p$.
(This is sufficient to prove Theorem~\ref{Thm2} since the Eisenstein part
of the generating series can be computed in advance and subtracted off by the
results of Section~\ref{Eis}.)  Since the period polynomials of
cusp forms in $S_{k,p}^+$ and $S_{k,p}^-$ take values in vector spaces
$\V_{w,p}^\pm$ that have trivial intersection, it is enough to prove the
injectivity of the restriction of $r^{\rm ev/od}$ to $ S^{ \e}_{k,p}$ for $\e=\pm1$. We also
note that $S_{k,p}^+$ can be identified with $S_k(\G_0^*(p))$ and $S_{k,p}^-$ 
with $S_k(\G_0^*(p),\chi_p)$, where $\chi_p:\G_0^*(p)\to\{\pm1\}$ is the homomorphism sending $\G_0(p)$ to~1 and 
$W_p$ to~$-1$.  (Similar statements would  apply to any squarefree level~$N$, with $S_{k,N}^\e$ for any 
$\e\in\Dg(N)^\vee$ being identified with $S_k(\G_0^*(N),\chi_\e)$ for the homomorphism $\chi_\e:\G_0(N)^*\to\{\pm1\}$
sending the coset $W_M\G_0(N)=\G_0(N)W_M$ to $\e(M)$ for all~$M\in\Dg(N)$.)

By the general Eichler-Shimura theory of periods (see e.g.~\cite{S} or \cite{K}), we know that for any 
Fuchsian group~$\G$ and any $f\in S_k(\G)$ the map $\g\mapsto r_{f,\g}\in\V_w$, where 
$r_{f,\g}(X)=\int_{\g^{-1}(\infty)}^\infty f(\t)(X-\t)^w\,d\t$, is a cocycle (one can write $r_{f,\g}(\t)$ 
with $\t\in\H$ as $\widetilde f(\t,\t)|_{-w^{\vphantom h}}(1-\g)$, with $\widetilde f$ as in~\eqref{rtildedef}, 
so \hbox{$\g\mapsto r_{f,\g}$} is a $\,\text{Hol}(\H)$-valued coboundary and hence a $\V_w$-valued cocycle), and that 
the linear map from $S_k(\G)\oplus \overline{S_{k}(\G)}$ to $H^1(\G,\V_w)$ sending $(f,\overline{g})$ to the cocycle 
$\g\mapsto r_{f,\g}{(X)}+{\overline{ r_{g,\g}(\overline X)}}$ is injective, with image 
$H_{ {\rm par}}^1(\G,  \V_w)$ (the first parabolic cohomology group of $\G$ with coefficients in $\V_w$, 
defined by cocycles sending any parabolic element $\g$ of $\G$ to an element of \hbox{$\V_w|(1-\g)$}).  
This applies not only to the trivial character, but also to the map from $S_k(\G,\chi)$ to $H^1(\G,\V_{w,\chi})$ 
for any homomorphism $\chi:\G\to\C^*$, where $\V_{w,\chi}$ is the vector space $\V_{w}$ with the twisted action 
of~$\G$ given by $P\mapsto P|_{-w^{\vphantom h},\chi}\g:=\chi(\g)\,P|_{-w^{\vphantom h}}\g$.  We use it for 
$\G=\G_0^*(p)$ and $\chi=1$ or $\chi_p$.  The last remark is that the action of the matrix $\delta=\sm-1&0\\0&1\esm$
on~$\H$ by $\t\mapsto-\bar\t$ induces an anti-linear map from $S_{k,p}^\e$
to itself by sending $f(\t)$ to $f^\delta(\t):=\overline{f(-\bar\t)}$.  Since $\delta$ fixes the imaginary axis
pointwise, a one-line calculation shows that the period polynomials of $f$ and~$f^\delta$ are related by
$\overline{r_{f^\delta}(\overline X)} =-r_f(-X)$, so if $f\in S_{k,p}^\e$ is a cusp form for which either
$r_f^{\rm ev}$ or $r_f^{\rm od}$ vanishes, then the cocycle attached to either $(f,\overline{f^\delta})$
or $(f,-\overline{f^\delta})$ by the Eichler-Shimura theorem vanishes on both $T$ and~$W_p$.

The statement of Theorem~\ref{Thm2} for  {$p=2$ or~3} is now clear, since we see from the fundamental
domains~$\mathcal F_p$ of the group $\G_0^*(p)$ as shown below that in both of these cases this group is generated 
by the two elements $T$ and $W_p$ (recall that we are always considering our matrices in $PGL_2^+(\R)$, 
so that we can write $W_p$ indiscriminately as $\sm 0&-1\\p&0\esm$ or $\frac1{\sqrt p}\sm 0&-1\\p&0\esm$), and
hence a cocycle is automatically determined by its values on these two elements. We can \phantom{use}
\smallskip


\begin{center}
\begin{tikzpicture}[scale=4]
  \def\itsqf{0.70710678118654752440084436210484903929}

  \coordinate (pm1) at (-0.5, 0.5);
  \coordinate (pp1) at ( 0.5, 0.5);
  \coordinate (p0) at (0, \itsqf);

  \draw[->] (-0.6, 0) -- (0.6, 0);
  \draw (-0.5, 0) -- (-0.5, -0.025); 
   \node at (-0.5, -0.020) [anchor=north] {$-\frac{1}{2}$};
  \draw (0, 0) -- (0, -0.025); 
   \node at (0, -0.020) [anchor=north] {$\vphantom{\frac12}0$};
  \draw (0.5, 0) -- (0.5, -0.025); 
   \node at (0.5, -0.020) [anchor=north] {$\frac{1}{2}$};

  \draw[fill=black!20!white] (-0.5,1.2) -- (pm1) 
  arc (135:45:\itsqf) -- (0.5,1.2);


  \fill (pm1) circle (0.005);
  \fill (p0) circle (0.005);
  \fill (pp1) circle (0.005);

  \draw[<->,shift={(pm1)}] (0,-0.04)+(90+45:0.03) arc (90+45:450-45:0.03);
  \draw[<->,shift={(p0)}]  (0,-0.04)+(90+45:0.03) arc (90+45:450-45:0.03);
  \draw[<->,shift={(pp1)}] (0,-0.04)+(90+45:0.03) arc (90+45:450-45:0.03);

  \node at (pm1) [anchor=west] {$\ P_{-1}$}; 
  \node at (p0) [anchor=south] {$P_{0}$}; 
  \node at (pp1) [anchor=east] {$P_{1}\,$}; 

  \node at (0, 1.0) {$\mathcal{F}_2$};

  \node at ([shift={(0,-0.07)}]pm1) [anchor=north]  {$\widetilde U_2$};
  \node at ([shift={(0,-0.07)}]p0) [anchor=north]  {$W_2$};
  \node at ([shift={(0,-0.07)}]pp1) [anchor=north]  {$U_2$};
\end{tikzpicture}
\hspace{9mm}
\begin{tikzpicture}[scale=4]
  \def\itsqf{0.28867513459481288225457439025097872782}

  \coordinate (pm1) at (-0.5, \itsqf);
  \coordinate (pp1) at ( 0.5, \itsqf);
  \coordinate (p0) at (0, 2*\itsqf);

  \draw[->] (-0.6, 0) -- (0.6, 0);
  \draw (-0.5, 0) -- (-0.5, -0.025); 
   \node at (-0.5, -0.020) [anchor=north] {$-\frac{1}{2}$};
  \draw (0, 0) -- (0, -0.025); 
   \node at (0, -0.020) [anchor=north] {$\vphantom{\frac12}0$};
  \draw (0.5, 0) -- (0.5, -0.025); 
   \node at (0.5, -0.020) [anchor=north] {$\frac{1}{2}$};

  \draw[fill=black!20!white] (-0.5,1.2) -- (pm1) 
  arc (150:30:2*\itsqf) -- (0.5,1.2);


  \fill (pm1) circle (0.005);
  \fill (p0) circle (0.005);
  \fill (pp1) circle (0.005);

  \draw[<->,shift={(pm1)}] (0,-0.04)+(90+45:0.03) arc (90+45:450-45:0.03);
  \draw[<->,shift={(p0)}]  (0,-0.04)+(90+45:0.03) arc (90+45:450-45:0.03);
  \draw[<->,shift={(pp1)}] (0,-0.04)+(90+45:0.03) arc (90+45:450-45:0.03);

  \node at (pm1) [anchor=west] {$\ P_{-1}$}; 
  \node at (p0) [anchor=south] {$P_{0}$}; 
  \node at (pp1) [anchor=east] {$P_{1}\,$}; 

  \node at (0, 1.0) {$\mathcal{F}_3$};

  \node at ([shift={(0,-0.07)}]pm1) [anchor=north]  {$\widetilde U_3$};
  \node at ([shift={(0,-0.07)}]p0) [anchor=north]  {$W_3$};
  \node at ([shift={(0,-0.07)}]pp1) [anchor=north]  {$U_3$};
\end{tikzpicture}

 \end{center}

\smallskip

\noindent   
use the form of the fundamental domains to give a more complete result in these two cases, describing the  {images of
the even and odd period maps} rather than merely proving their injectivity.  The vertices of~$\mathcal F_2$ are $\infty$, 
$P_{\pm1}=\pm\frac12+\frac i2$ and $P_0=\frac i{\sqrt 2}$  and those of~$\mathcal F_3$ are $\infty$, 
$P_{\pm1}=\pm\frac12+\frac i{\sqrt{12}}$ and $P_0=\frac i{\sqrt 3}$, where in each case $\infty$~is fixed by~$T$, 
$P_0$ by $W_p$, $P_1$ by $U_p:=TW_p=\sm p&-1\\p&0\esm$, and $P_{-1}$ by $\widetilde U_p:=T^{-1}W_p=\sm-p&-1\\p&0\esm$.  
In particular, the group $\G_0^*(p)$ is generated by the two elements 
$W_p$ and $U_p$ with the relations~$W_p^2=U_p^{\,2p}=1$ (once again, in $PGL_2^+(\R)$). It follows that the 
period polynomial of any $f\in S_{k,p}^\e$ belongs to the subspace
$$ \W_{w,p}^\e \df \Bigl\{ P\in \V_{w,p}^\e \;:\; \sum_{j=0}^{2p-1} P \bigl|_{-w^{\vphantom h},\e}U_p^j=0\Bigr\}$$
of $\V_{w,p}^\e$ (here $|_{-w,\e}$ means $|_{-w,\chi_\e}$ with $\chi_+=1$, $\chi_-=\chi_p$), because
$$ \Bigl(r_f\Bigl|_{-w,\e^{\vphantom h}}\sum_{j=0}^{2p-1} U_p^j\Bigr)(X) 
 \= \sum_{j=0}^{2p-1} \int_{U_p^{-j-1}(\infty)}^{U_p^{-j}(\infty)}f(\t)\,(\t-X)^{w}\,d\t \=0\,.$$
We can also consider the even and odd period maps $f\mapsto r_f^{\rm ev/od}\in \W_{w,p}^{\rm ev/od,\e}$,
and note that, as in~Section~\ref{PerPolys}, these maps extend in the even case to all of $M_{k,p}^\e$, whereas
for the odd period map we would have to replace $\W_{w,p}^\e$ by a one dimension larger space $\widehat\W_{w,p}^\e$
in order to be able to include the Eisenstein series in~$M_{k,p}^\e$.  By the Eichler-Shimura theorem once again,
we know that the space $H^1_{\rm par}(\G_0^*(p),\V_{w,\chi_\e})$ of parabolic cohomology classes
on~$\G_0^*(p)$ with values in~$\V_{w,\chi_\e}$ is isomorphic to two copies of $S_{k,p}^\e$  (or more correctly, of 
one copy of this space and one copy of its complex conjugate), and from the presentation of $\G_0^*(p)$ we see that 
this space is isomorphic to \hbox{$\W_{w,p}^{ \e}/ \langle \e p^{w/2} X^w-1\rangle$}. Putting everything together, 
we have:
\begin{proposition}\label{2and3} For $p\in\{2,3\}$ and $\e\in\{\pm1\}$ the maps $f\mapsto r_f^{\rm ev}$ and
$f\mapsto r_f^{\rm od}$ give isomorphisms
$$ M_{k,p}^\e \stackrel{\sim}\longrightarrow \W_{w,p}^{\rm ev,\e}\,, \qquad 
 S_{k,p}^\e \stackrel{\sim}\longrightarrow \W_{w,p}^{\rm od,\e}\,. $$
\end{proposition}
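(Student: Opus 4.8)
The plan is to extract both isomorphisms from the Eichler--Shimura description recalled above, exploiting the presentation $\G_0^*(p)=\langle T,W_p\rangle$ that holds for $p\in\{2,3\}$. The starting point is that the cocycle $\g\mapsto r_{f,\g}$ takes transparent values on the two generators: since $T$ fixes $\infty$ one has $r_{f,T}=0$, while $W_p$ interchanges $0$ and $\infty$ so that $r_{f,W_p}=r_f$. Because the group is generated by $T$ and $W_p$ and the cocycle vanishes on the parabolic generator $T$, the whole cocycle, and hence its cohomology class, is determined by the single polynomial $r_f$. Combined with the involution identity $\overline{r_{f^\delta}(\overline X)}=-r_f(-X)$, the classes attached to $(f,\overline{f^\delta})$ and to $(f,-\overline{f^\delta})$ have respective $W_p$-values $2r_f^{\rm od}$ and $2r_f^{\rm ev}$. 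Injectivity of the Eichler--Shimura map on $S_{k,p}^\e\oplus\overline{S_{k,p}^\e}$ then forces both $r^{\rm od}$ and $r^{\rm ev}$ to be injective on $S_{k,p}^\e$.

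To turn injectivity into the asserted isomorphisms I would run a dimension count against the two identifications $H^1_{\rm par}(\G_0^*(p),\V_{w,\chi_\e})\cong\W_{w,p}^\e/\langle\e p^{w/2}X^w-1\rangle$ and $H^1_{\rm par}\cong S_{k,p}^\e\oplus\overline{S_{k,p}^\e}$. The parity operator $P(X)\mapsto P(-X)$ is induced by conjugation by $\delta$, which normalizes $\G_0^*(p)$ (it sends $T\mapsto T^{-1}$ and fixes $W_p$), so it acts on $H^1_{\rm par}$ with $\pm1$-eigenspaces equal to the even and odd parts of the $\W$-model. Under Eichler--Shimura this operator interchanges the two summands $S_{k,p}^\e$ and $\overline{S_{k,p}^\e}$ (composed with $f\mapsto f^\delta$), so each eigenspace has dimension $\dim_\C S_{k,p}^\e$; reading this off the quotient gives $\dim\W_{w,p}^{\rm od,\e}=\dim S_{k,p}^\e$ and $\dim\W_{w,p}^{\rm ev,\e}=\dim S_{k,p}^\e+1$.

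The delicate point, and the place where the even and odd statements genuinely diverge, is to locate the Eisenstein series and thereby match $\dim\W_{w,p}^{\rm ev,\e}$ with $\dim M_{k,p}^\e$ rather than $\dim S_{k,p}^\e$. Here the key computation is that the even period of the Hecke Eisenstein series $G_{k,p}^\e$ is, up to a scalar, exactly the distinguished vector $\e p^{w/2}X^w-1$ that is divided out in passing from $\W_{w,p}^\e$ to $H^1_{\rm par}$; this follows from the level-one formulas~\eqref{CkEis1}--\eqref{defQk} and the induction relation~\eqref{relPer}. Hence $r^{\rm ev}(G_{k,p}^\e)$ spans a line complementary to $r^{\rm ev}(S_{k,p}^\e)$, so $r^{\rm ev}$ is injective on all of $M_{k,p}^\e=S_{k,p}^\e\oplus\langle G_{k,p}^\e\rangle$ with image all of $\W_{w,p}^{\rm ev,\e}$, while the odd Eisenstein period leaves $\W_{w,p}^{\rm od,\e}$ and so does not enlarge the source of the odd map. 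Using $\dim M_{k,p}^{\rm Eis,\e}=1$ for $k\ge4$ (two cusps, exchanged one-for-one by the Atkin--Lehner involution at prime level), both maps become isomorphisms. The degenerate weight $k=2$, where $w=0$, the distinguished vector collapses, and the relevant cusp-form spaces vanish, must be checked separately by hand; I expect this asymmetric bookkeeping of the Eisenstein contribution, rather than any single hard estimate, to be the main obstacle.
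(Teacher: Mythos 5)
Your proposal is correct and follows essentially the same route as the paper: injectivity on $S_{k,p}^\e$ via generation of $\G_0^*(p)$ by $T$ and $W_p$ together with the Eichler--Shimura theorem and the involution $\delta$, then the identification $H^1_{\rm par}(\G_0^*(p),\V_{w,\chi_\e})\cong\W_{w,p}^\e/\langle\e p^{w/2}X^w-1\rangle$ and a dimension count against $S_{k,p}^\e\oplus\overline{S_{k,p}^\e}$, with the Eisenstein series accounting for the extra line in the even part. Your one addition --- explicitly checking via~\eqref{CkEis1} and~\eqref{relPer} that $r^{\rm ev}_{G_{k,p}^\e}$ is a nonzero multiple of the distinguished coboundary vector $\e p^{w/2}X^w-1$ --- is exactly the detail hidden in the paper's ``putting everything together,'' and your flagging of the degenerate case $k=2$ (where that scalar degenerates) is a point the paper also passes over silently.
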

We mention in passing that the translation into generating functions of the fact that the period polynomials belong 
to~$\widehat\W_{w,p}$, together with the equality~$\CC_p=\BC_p$ of Theorem~\ref{Thm1}, gives the 4-term theta relation 
\begin{align*} & \BC_2(X,Y,\t,T) \m  \BC_2\Bigl(\frac X{1-2X},Y,\t,(1-2X)T\Bigr)  \\
  & \quad \+ \BC_2\Bigl(\frac{X-1}{2X-1},Y,\t,(2X-1)T\Bigr) \m  \BC_2(X-1,Y,\t,T) \= 0\,, \end{align*}
and a similar 6-term relation for~$\BC_3(X,Y,\t,T)$.  

To complete the above result, we also give a formula expressing the Petersson scalar product of two cusp forms $f,\,g\in S_{k,p}^\e$ 
($p=2$ or~3) in terms of the period polynomials $r_f$ and $r_g$.  Formulas of this type were given by Haberland~\cite{H} and
in a slightly different form in~\cite{KZ}, for the group $SL_2(\Z)$ and arbitrary weight, and for general Fuchsian groups in~\cite{Z1985} 
(only for weight~2, but the corresponding formula holds in all weights).  But of course for general groups one needs
the full cocycle $\gamma\mapsto r_{f,\g}$.  Here we give the explicit formulas for~$p=2$ and~$p=3$, and also for~$p=5$ below, giving
only a sketch in each case since the methods are by now standard and since an equivalent result is also stated in the recent 
paper~\cite{PP2013} by Pasol and~Popa. (But they do not give any proof or reference for the cases $p=3$ and $p=5$,
and also express everything in terms of the standard generators of $SL_2(\Z)$ and the inclusion $\G_0(p)\subset SL_2(\Z)$, while 
we work directly with~$\G_0^*(p)$ and its generators.)
We recall the standard definition of a $PGL_2^+(\R)$-invariant scalar product on~$\V_w$, given by the formula
$(X^r,X^s)_{\V_w}=(-1)^r\delta_{r+s,w}/\binom wr$ for $0\le r,\,s\le w$, or equivalently by the formula 
$((X-a)^w,P(X))_{\V_w}=P(a)$ for~$a\in\C$ and~$P\in\V_w$.  
\begin{proposition}\label{Hab2and3} For $p\in\{2,3\}$ and $\e\in\{\pm1\}$, the Petersson scalar product
of two cusp forms $f,\,g\in S_{k,p}^\e$ is given by
\be\label{Hab23} (2i)^{k-1}\,\la f,\,g\ra \=  -\frac1{p}\,\bigl(r_f|_{-w,\e}\mathcal A_p,\,r_g^\iota\bigr)_{\V_w}\,, \ee
where $\,\mathcal A_p:=\sum_{j=1}^{p-1}(p-j)(\widetilde U_p^j-U_p^j)\in\Z[\G_0^*(p)]\,$ and $r_g^\iota(X):=\overline{r_g\bigr(\overline X\bigl)}$.
\end{proposition}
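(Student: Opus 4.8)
The plan is to realize the Petersson product as a boundary integral over a fundamental domain and then to evaluate that integral using the explicit generators of $\G_0^*(p)$. First I would pass from $\G_0(p)$ to its normalizer $\G_0^*(p)$: since $f,g\in S_{k,p}^\e$ share the same Atkin--Lehner eigenvalue, the density $f(\t)\,\overline{g(\t)}\,y^{k-2}\,dx\,dy$ is $\G_0^*(p)$-invariant, so $\la f,g\ra$ equals $[\G_0^*(p):\G_0(p)]=2$ times the integral of this density over a fundamental domain $\mathcal F_p$ for $\G_0^*(p)$. The pointwise input is that, regarding $\omega_f:=f(\t)(X-\t)^w\,d\t$ as a $\V_w$-valued $1$-form and using the invariant pairing together with the identity $\bigl((X-\t)^w,(X-\overline\t)^w\bigr)_{\V_w}=(2iy)^w$, one gets $\bigl(\omega_f,\overline{\omega_g}\bigr)_{\V_w}=-(2i)^{k-1}\,f\,\overline g\,y^{k-2}\,dx\wedge dy$, whence $(2i)^{k-1}\la f,g\ra=-2\int_{\mathcal F_p}\bigl(\omega_f,\overline{\omega_g}\bigr)_{\V_w}$.

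Next I would integrate by parts. Because $\omega_f=-d\widetilde f$ with $\widetilde f$ the truncated period of~\eqref{rtildedef}, and because $\overline{\omega_g}$ is closed (its coefficient being antiholomorphic), the integrand is exact, $\bigl(\omega_f,\overline{\omega_g}\bigr)_{\V_w}=-\,d\bigl(\widetilde f,\overline{\omega_g}\bigr)_{\V_w}$, and Stokes' theorem gives $(2i)^{k-1}\la f,g\ra=2\int_{\partial\mathcal F_p}\bigl(\widetilde f,\overline{\omega_g}\bigr)_{\V_w}$. Convergence at the cusp is handled exactly as in the level-one computation of Haberland and Kohnen--Zagier, using that $f$ and $g$ are cusp forms.

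The core of the argument is the evaluation of this boundary integral from the explicit shape of $\mathcal F_p$ shown above. I would split $\partial\mathcal F_p$ into the edges joining the vertices $\infty$, $P_0$ and $P_{\pm1}$ and group them by the side-pairings of $\G_0^*(p)$. The two vertical edges at $\infty$, identified by $T=U_pW_p$, contribute nothing, the Eichler cocycle $r_{f,T}=\widetilde f-\widetilde f\big|_{-w,\e}T$ vanishing for the cusp form~$f$. On each remaining edge I would use the cocycle relation $r_{f,\g}=\widetilde f-\widetilde f\big|_{-w,\e}\g$ together with the $\G_0^*(p)$-invariance of $\overline{\omega_g}$ to rewrite the integral as the invariant pairing of a cocycle value $r_{f,\g}$ against the antiholomorphic period $r_g^\iota$. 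Circling the order-$2p$ elliptic vertices $P_1$ and $P_{-1}$, which are fixed by $U_p$ and $\widetilde U_p$, produces the iterated translates $U_p^{\,j}$ and $\widetilde U_p^{\,j}$; an Abel summation of these telescoping contributions, followed by the reflection $f\big|_kW_p=\e f$ that folds the $2p$ powers down to $j=1,\dots,p-1$, is what assembles the operator $\mathcal A_p=\sum_{j=1}^{p-1}(p-j)\bigl(\widetilde U_p^{\,j}-U_p^{\,j}\bigr)$ and, after combining with the index~$2$, produces the prefactor~$-\tfrac1p$.

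The step I expect to be the main obstacle is exactly this last piece of bookkeeping: producing the linear weights $(p-j)$ and the truncation at $j=p-1$ rather than $2p-1$, and tracking every normalizing constant---the index $[\G_0^*(p):\G_0(p)]$, the orders of the elliptic fixed points, and the sign conventions in the Eichler cocycle---so that the overall constant comes out to exactly $-1/p$. For $p=2$ and $p=3$ this is a short, purely finite check, involving only the two interior edges of $\mathcal F_p$ and a fixed number of elliptic terms to track, which is why I would present it as a sketch; the same scheme applied to the larger fundamental domain of $\G_0^*(5)$ then yields the analogous formula recorded below for $p=5$.
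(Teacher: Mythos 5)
Your proposal is correct and follows essentially the same route as the paper's proof: Stokes' theorem over the fundamental domain $\mathcal F_p$ of $\G_0^*(p)$, vanishing of the two vertical edges via the cocycle value at~$T$, folding of the bottom arc by~$W_p$, and the averaging over the order-$2p$ stabilizers of $P_{\pm1}$ to assemble $\mathcal A_p$, with the final weight-$(p-j)$ bookkeeping left as a finite check exactly as the paper leaves it ``to the reader.'' The only (cosmetic) difference is that you integrate by parts on the holomorphic side, writing $\omega_f=-d\widetilde f$ and pairing with $\overline{\omega_g}$, while the paper uses the antiholomorphic primitive $G(\t)=\overline{\widetilde g(\bar\t,\t)}$ paired against $f(\t)\,d\t$; you also make the index $[\G_0^*(p):\G_0(p)]=2$ and orientation constants explicit where the paper absorbs them silently.
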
 
We make several remarks about this proposition before giving its proof. \newline
\noindent{\bf 1.} Equation~\eqref{Hab23} is compatible with the equation $\overline{\la f,g\ra}=\la g,f\ra$,
because the operator $\,|\mathcal A_p$ is anti-self-adjoint with respect to $(\;,\;)_{\V_w}$.  \newline
\noindent{\bf 2.} Since $r_g^\iota(X)=-r_{g^\delta}(-X)$, and since  $\,|\mathcal A_p$ also anti-commutes with the matrix $\delta=\sm-1&0\\0&1\esm$
(because $\delta U_p\delta=\widetilde U_p$),  we can rewrite $2\bigl(r_f|\mathcal A_p,\,r_g^\iota\bigr)_{\V_w}$ as
$\bigl(r_f^{\rm ev}|\mathcal A_p,\,r_{g^\delta}^{\rm od}\bigr)_{\V_w}+\bigl(r_{g^\delta}^{\rm ev}|\mathcal A_p,\,r_f^{\rm od}\bigr)_{\V_w}$,
which is symmetric in $f$ and~$g^\delta$~and involves only pairings between even and odd period polynomials, as it should.  This immediately
implies the injectivity of $ r^{\rm ev/od}: f\mapsto r_f^{\rm ev/od}$ if $f$  has real Fourier coefficients (because with $f=f^\delta=g$ 
it shows that $(f,f)$ vanishes if $r_f^{\rm ev}$ or $r_f^{\rm od}$ vanishes), and this in turn implies the injectivity for all~$f$.  
(If $f=f_1+if_2$ with $f_1$ and $f_2$ real, then $r_f^{\rm ev}$ is the sum of $r_{f_1}^{\rm ev}$ and $ ir_{f_2}^{\rm ev}$, one of which has 
real coefficients and the other pure imaginary coefficients, so if $r_f^{\rm ev}=0$ then $r_{f_1}^{\rm ev}=r_{f_2}^{\rm ev}=0$ and 
hence $f_1=f_2=0$, and similarly for $r_f^{\rm od}$.) \newline
\noindent{\bf 3.} It also follows that $\Phi[\Rg_f(X,Y)]=1$ for all~$f\in\BB_{k,p}^{\rm cusp}$, where
$\Phi$ is the map from $\V^{\rm ev,\e}_{w,p}(X)\otimes\V^{\rm od,\e}_{w,p}(Y)\subset\C[X,Y]$ to~$\C$ sending
$r_1(X)r_2(Y)$ to $\frac1{2p}(r_1|\mathcal A_p,r_2)$. (One can check this numerically for each of the 11 cusp forms in the tables 
for~$N=2$ and~$N=3$ in~\S\ref{Examples}.) This implies that $\Phi[C_k(X,Y,\t)]$ is the modular form whose $n$th Fourier
coefficient is the trace of $T_n$ on $M_{k,p}$, so one could also use the proposition to compute the
traces of Hecke operators for $\G_0^*(2)$ and $\G_0^*(3)$, as was done in~\cite{Z1990} for the full modular group. \newline
\noindent{\bf 4.} Finally, one could also use Proposition~\ref{Hab2and3} to give an explicit description of the codimension~1 subspace 
$r^{\rm ev}(S_{k,p}^\e)\subset\W_{k,p}^{\rm ev,\e}$, as was done in~\cite{KZ} for~$SL_2(\Z)$, by extending~\eqref{Hab23} 
to the case when one of~$f$ or~$g$ is an Eisenstein series and observing that $\la f,G_{k,p}^\e\ra$ vanishes for $f\in S_{k,p}^\e$.

\begin{proof} Let $\widetilde f(X,\t)$ be the function defined in~\eqref{rtildedef}, and $\widetilde g(X,\t)$ the corresponding
function for~$g$.  Then the function $G(\t):=\overline{\widetilde g(\bar\t,\t)}$ transforms by $G|_{-w,\e}(1-\g)=\overline{r_{g,\g}(\bar\t)}$
for all $\g\in\G_0^*(p)$.  Also, $\partial G/\partial\bar\t=-(2iy)^w\overline{g(\t)}$, so
\begin{align*} (2i)^{k-1}\,\la f,\,g\ra &\= \iint_{\mathcal F_p} d\bigl[f(\t)G(\t)d\t\bigr] \= \int_{\partial\mathcal F_p} f(\t)\,G(\t)\,d\t \\
   & \= \int_{P_{-1}}^{P_1} f(\t)\,G(\t)\,d\t \= \frac12\,\int_{P_{-1}}^{P_1} f(\t)\,r_g^\iota(\t)\,d\t\,, \end{align*}
where the second and third equalities follow from Stokes's theorem and the periodicity of $f(\t)G(\t)$ and the last one because the
lower edge of~$\mathcal F_p$ is mapped orientation-reversingly onto itself by~$W_p$ and~$f|_{k,\e}W_p=f$. By the above-mentioned 
property of $(\,\cdot\,,\,\cdot\,)_{\V_w}$ we have $\,r_g^\iota(\t)=((X-\t)^w,r_g^\iota(X))_{\V_w}$, so this can be rewritten
\begin{align*} (2i)^{k-1}\,\la f,\,g\ra &\=  \biggl(\frac12\,\int_{P_{-1}}^{P_1} f(\t)\,(\t-X)^w\,d\t\,,\,r_g^\iota(X)\biggr)_{\V_w} \\
   & \= \frac12\,\Bigl(\widetilde f(X,P_{-1})\m \widetilde f(X,P_1),\,r_g^\iota(X)\Bigr)_{\V_w}\;. \end{align*}
Since $r_g^\iota\bigr|_{-w,\e}\sum_{\text{$j$ (mod~$2p$)}}U_p^j=0$ and  $\widetilde f(X,P_1)|_{-w,\e}(1-U_p^j)=r_{f,U_p^j}$
for all~$j$ (by the transformation law of~$\widetilde f$ and because $U_p$ fixes~$P_1$), we have
\begin{align*} \Bigl(\widetilde f(X,P_1),\,r_g^\iota(X)\Bigr)_{\V_w} &\= 
\Bigl(\widetilde f(X,P_1)\bigr|_{-w,\e}\Bigl(1-\frac1{2p}\sum_{\text{$j$ (mod~$2p$)}}U_p^j\Bigr),\,r_g^\iota(X)\Bigr)_{\V_w} \\
 &\=\frac1{2p}\sum_{\text{$j$ (mod~$2p$)}}\bigl(r_{f,U_p^j}(X),\,r_g^\iota(X)\bigr)_{\V_w} \,. \end{align*}
Also, the cocycle property gives $r_{f,U_p^j}=r_f|_{-w^{\vphantom h},\e}(1+U_p+\cdots+U_p^{j-1})$ (and similarly for~$\widetilde U_p$).
The assertion of the proposition now follows after a short calculation whose details are left to the reader. 
\end{proof}

We now turn to the remaining case $p=5$.  Here the situation is more complicated because the element $TW_5$ no
longer has finite order and because $\G_0^*(5)$ is no longer generated by $W=W_5$ and $T$.  Instead, we have
\be\label{gens} \G_0^*(5) \= \big\la A,\,B,\,W\big\ra\,,  \qquad A^2=B^2=W^2=1,\quad BAW\,=\,T\,,\ee
where $A=\sm2&-1\\5&-2\esm$, $B=\sm5&-3\\10&-5\esm$.  One can see this from the fundamental domain for~$\G$ shown below,
which is a hyperbolic polygon whose vertices $P_1=\frac{2+i}5$, $P_2=\frac{\sqrt5+i}{2\sqrt5}$, $P_0=\frac i{\sqrt5}$,
$P_{-1}=\frac{-2+i}5$ and $P_{-2}=\frac{-\sqrt5+i}{2\sqrt5}$ and~$\infty$ are the fixed points of the five involutions 
$A$, $B$, $W$, $A_1=WAW$ and $B_1=T^{-1}BT$ of $\G_0^*(5)$ and of the parabolic element~$T$.  What we have to prove 
is that for any $f\in S_{k,5}^\e\cong S_k(\G_0^*(5),\chi_\e)$ the cocycle  $\g\mapsto r_{f,\g}$ is determined by 
just the even or just the odd part of the basic period polynomial~$r_{f^{\vphantom h}}=r_{f^{\vphantom h},W}$.
\vadjust{\rlap{%
\centerline{%
\begin{tikzpicture}[scale=6]
  \draw[white] (-0.6,-0.12) -- (-0.6,0.93);
  \def\itsqf{0.22360679774997896964091736687312762354}
  \def\angf{63.434948822922010648427806279546705329}
  %
  \coordinate (pm2) at (-0.5, \itsqf);
  \coordinate (pp2) at ( 0.5, \itsqf);
  \coordinate (pm1) at (-0.4, 0.2);
  \coordinate (pp1) at ( 0.4, 0.2);
  \coordinate (p0) at (0, \itsqf + \itsqf);
  %
  \draw[->] (-0.6, 0) -- (0.6, 0);
  \draw (-0.5, 0) -- (-0.5, -0.025); 
   \node at (-0.5, -0.020) [anchor=north] {$-\frac{1}{2}$};
  \draw (0, 0) -- (0, -0.025); 
   \node at (0, -0.020) [anchor=north] {$\vphantom{\frac12}0$};
  \draw (0.5, 0) -- (0.5, -0.025); 
   \node at (0.5, -0.020) [anchor=north] {$\frac{1}{2}$};
   %
  \draw[fill=black!20!white] (-0.5,.9) -- (pm2) 
  arc (90:\angf:\itsqf) 
  arc (90+\angf:90-\angf:\itsqf+\itsqf) 
  arc (180-\angf:90:\itsqf) -- (0.5,.9);
  %
  %
  \fill (pm2) circle (0.005);
  \fill (pm1) circle (0.005);
  \fill (p0) circle (0.005);
  \fill (pp1) circle (0.005);
  \fill (pp2) circle (0.005);
  %
  \draw[->,shift={(pm2)}] (0,-0.04)+(90+45:0.03) arc (90+45:450-45:0.03);
  \draw[->,shift={(pm1)}] (0,-0.04)+(90+45:0.03) arc (90+45:450-45:0.03);
  \draw[->,shift={(p0)}]  (0,-0.04)+(90+45:0.03) arc (90+45:450-45:0.03);
  \draw[->,shift={(pp1)}] (0,-0.04)+(90+45:0.03) arc (90+45:450-45:0.03);
  \draw[->,shift={(pp2)}] (0,-0.04)+(90+45:0.03) arc (90+45:450-45:0.03);
  %
  \node at (pm2) [anchor=south west] {$P_{-2}$}; 
  \node at (pm1) [anchor=west] {$P_{-1}$}; 
  \node at (p0) [anchor=south] {$P_{0}$}; 
  \node at (pp1) [anchor=east] {$P_{1}\,$}; 
  \node at (pp2) [anchor=south east] {$P_{2}$}; 
  \node at (0, 0.73) {$\mathcal{F}_5$};
  \node at ([shift={(0,-0.07)}]pm2) [anchor=north]  {$B_{1}$};
  \node at ([shift={(0,-0.07)}]pm1) [anchor=north]  {$A_{1}$};
  \node at ([shift={(0,-0.07)}]p0) [anchor=north]  {$W$};
  \node at ([shift={(0,-0.07)}]pp1) [anchor=north]  {$A$};
  \node at ([shift={(0,-0.07)}]pp2) [anchor=north]  {$B$};
\end{tikzpicture}}}}

As in the cases $p=2$ and $p=3$, the Eichler-Shimura theory of periods tells us that the parabolic cohomology 
group $H^1_{\rm par}(\G_0^*(5),\V_{w,\chi_\e})$ is isomorphic to two copies of $S_{k,5}^\e$.  Since $\G_0^*(5)$ has 
only one cusp, all of its parabolic elements are conjugate to~$T$, so parabolic cohomology classes can be represented by 
cocycles $\g\mapsto r_{\g}$ with~$r_T=0$, these representative being unique up to the 1-dimensional space of coboundaries
given by $\g\mapsto1|(1-\g)$. (Here and for the rest of the section we write simply $\,|\,$ for the twisted operation
 $\;|_{-w,\e}=|_{-w,\chi_\e}$ of~$\G_0^*(5)$ on~$\V_w$.)  From the above presentation of $\G_0^*(5)$, a cohomology class
is determined by the three elements~$r_A$, $r_B$ and $r=r_W$, with
 $r_A|(1+A)= r_B|(1+B)=r|(1+W)=0$ (because $A$, $B$ and~$W$ are involutions) and $r_B|AW+r_A|W+r=r_{BAW}=0$, 
so we can eliminate $r_A$ and identify the space $Z^1_{\rm par}(\G_0^*(5),\V_{w,\chi_\e})$ of parabolic cocycles with the space
$\W_{w,5}^\e $ of pairs of polynomials $(r,\,r_B)$ in $(\V_{w,5}^{\e})^2$ satisfying $r|(1+W)=r_B|(1+B)=(r-r_B)|(1+A)=0$. If it were 
true that such a pair is determined by its first element~$r$, then we would be done.  However, this is not the case.
Instead, if $r=0$ we have $r_B=-r_B|A=r_B|BA$.  Since $BA$ is hyperbolic, its fixed-point set in~$\V_w$ is 1-dimensional, spanned
by the polynomial $(5X^2-5X+1)^{w/2}$, and this polynomial is also anti-invariant (with respect to the twisted action of~$\G_0^*(5)$)
under both~$A$ and~$B$ if $\e=-(-1)^{w/2}=(-1)^{k/2}$.  The map $(r,r_B)\mapsto r$ from parabolic cocycles to polynomials is
therefore {\it not} injective in these cases, but has a 1-dimensional kernel.  But this failure does not mean that the map $f\mapsto r_f$ 
is not injective, because the image of $(S_{k,5}^\e)^2$ under the period map has codimension~1 in the space of all parabolic cocycles. and 
the offending vector $(r,r_B)=(0,(5X^2-5X+1)^{w/2})$ luckily does not belong to this image.  This follows from a Haberland-type formula, 
which we now state briefly, for the Petersson scalar products of cusp forms on~$\G_0(5)$ in terms of their periods.

To state the formula, we need to use the decomposition of the space of cocycles into an even and an odd part, corresponding to the
$(\pm1)$-eigenspaces of the action of the involution~$\delta$ of the space $H^1_{\rm par}$ induced by the involution
$\t\mapsto-\bar\t$ of $\H/\G_0^*(5)$.  For this it is convenient to change our description of the space $\W_{w,5}^\e$, 
replacing $r_B(X)$ by $r^*(X):=r_B((X+1)/2)$, because then the integral representation 
$$ r_f^*(X) \= r_{f,B}\Bigl(\frac{X+1}2\Bigr) \= 2^{1-k}\int_0^\infty f\Bigl(\frac{\t+1}2\Bigr)\,(X-\t)^w\,d\t $$
and the same one-line calculation as for $r_f$ show that $\overline{r^*_{f^\delta}(\overline X)} =-r^*_f(-X)$. We have isomorphisms
$$ \r^{\rm od}:\,S_{k,5}^\e \overset\sim\longrightarrow \W_{w,5}^{\rm od,\e}, \quad
\r^{\rm ev}:\,S_{k,5}^\e \overset\sim\longrightarrow \W_{w,5}^{\rm ev,\e}/\la(1-\e5^{w/2}X^w,1-\e5^{w/2}X^w)\ra,$$
given by $f\mapsto(r_f^{\rm ev/od},(r_f^*)^{\rm ev/od})$, where 
 $$  \W_{w,5}^{\rm ev/od,\e} \= \bigl\{(r,r^*)\in(\V_{w,5}^{\rm ev/od,\e})^2\mid (r(X)-r^*(2X-1))|(1+A)=0\}\,.$$
We illustrate this with the examples $k=8$, $\e=\pm1$.  For $\e=-1$ we find that a pair of polynomials
$(r,r^*)=(a_0(125X^6+1)+a_2(5X^4+X^2)+a_1(25X^5-X)$, $a_0^*(125X^6+1)+a_2^*(5X^4+X^2)+a_1^*(25X^5-X))$ belongs to $\W_{6,5}^-$ 
if and only if $(a_0^*,a_2^*,a_1^*)=(8a_0+\frac{33}{20}a_2,-\frac{17}4a_2,-6a_1)$, so that $r^*$ is completely determined by~$r$, while
for $\e=1$ we find that $(r,r^*)=(a_0(125X^6-1)+a_2(5X^4-X^2)+a_1(25X^5+X)+a_3X^3,a_0^*(125X^6-1)+a_2^*(5X^4-X^2)+a_1^*(25X^5+X)+a_3^*X^3)$
belongs to $\W_{6,5}^+$ if and only if $(a_0^*,a_2^*,a_1^*,a_3^*)=(-8a_0+m,-8a_2-15m$, $\frac{149}4a_1+\frac{15}4a_3,-\frac{625}2a_1-\frac{67}2a_3)$ 
for some~$m$, so that in this case $r^*$ is determined by~$r$ only up to the addition of a multiple of $(5X^2-1)^3$, in accordance with the 
assertions above.  Both statements can be checked numerically for the period polynomials of the cusp forms $\D_8^-\in S_{8,5}^-$, 
where $a_2=-\frac{65}6a_0$, and $f_8\in S_{8,5}^+$, where $a_2=-\frac{97+\sqrt{19}}4a_0$ and $a_3=-\frac{137+\sqrt{19}}{15}a_1$ as 
given in the $N=5$ tables in Section~\ref{Examples}.

The formula for the scalar product of two cusp forms $f,\,g\in S_{k,5}^\e$ with real Fourier coefficients can now be stated as
\be \label{Hab5} (2i)^{k-1}\,\la f,\,g\ra \= \bigl(r_f,\,r_g^*|(T-T^{-1})\bigr)_{\V_w} \+ \bigl(r_g,\,r_f^*|(T-T^{-1})\bigr)_{\V_w}\,. \ee
We omit the proof of this formula, which proceeds along exactly the same lines as the proof of Proposition~\ref{Hab2and3} above, but 
mention that it implies the injectivity of the maps $f\mapsto r_f^{\rm ev}$ and $f\mapsto r_f^{\rm od}$ by the same argument as in the 
second remark following that proposition (now using that $|(T-T^{-1})$ is anti-self-adjoint and anti-commutes with $\delta$). We can 
also check using the data given in the tables for~$N=5$ in Section~\ref{Examples} that~\eqref{Hab5} gives the correct values of $\la f,f\ra$ for 
each of the cusp forms~$f$ listed there and also gives the orthogonality of the Hecke form $f_8\in S_{8,5}^+$ with its Galois conjugate eigenform.

\bigskip

{\bf Acknowledgments.} The first autohr was partially supported by the grants   NRF-2017R1A2B2001807 and
NRF-2016R1A2B1012330. The second author was partially supported by the grant
NRF-2017R1D1A1B03029519 and NRF-2009-0093827.
  The first author would like to thank the Max Planck 
Institute for Mathematics in Bonn, and the third author both POSTECH (Pohang) and KIAS (Seoul),
for supporting research stays while this work was being completed.
 
\bigskip \bigskip 

\bibliographystyle{amsplain}

\end{document}